\newcommand{\N}{{\mathbb{N}}}
\newcommand{\E}{{\mathbb{E}}}
\newcommand{\1}{{\mathds{1}}}
\newcommand{\PX}{{\mathbb{P}}}
\newcommand{\ER}{{Erd\H os-R\'enyi}}
\newcommand{\PA}{{preferential attachment}}
\newcommand{\Var}{{\mathrm{Var}}}
\theoremstyle{definition}
\newtheorem{dfn}{Definition}
\theoremstyle{plain}
\newtheorem{thm}[dfn]{Theorem}
\newtheorem{lemma}[dfn]{Lemma}
\numberwithin{dfn}{section}
\title{Random graphs and their subgraphs}
\author{Klemens Taglieber\\
		University of Stuttgart\\
		\and
		Uta Freiberg\\
		University of Stuttgart}
\date{\today}
\begin{document}

\maketitle


\begin{abstract} Random graphs are more and more used for modeling real world networks such as evolutionary networks of proteins. For this purpose we look at two different models and analyze how properties like connectedness and degree distributions are inherited by differently constructed subgraphs. We also give a formula for the variance of the degrees of fixed nodes in the \PA \ model and additionally draw a connection between weighted graphs and electrical networks.\end{abstract}


\section{Introduction}
The modeling and analysis of random graphs is a good possibility to understand and examine real world networks. The first random graphs were introduced between 1959 and 1961 by Paul Erd\H{o}s and Alfr\'ed R\'enyi \cite{ER59,ER60,ER61}. This model is connected to percolation theory which has several applications in physics. Up until today new models are developed such as the \PA \ model which was worked out by L\'aszl\'o Barab\'asi and R\'eka Albert \cite{BA99} in 1999. This model is suitable for the modeling of most networks we are surrounded by such as the Internet, the World Wide Web or friendship networks. More and more random graphs are used in biology to analyze a variety of mechanisms like for example the spreading of epidemics or the evolution of proteins. Looking at protein networks the question arises if one can predict the not yet discovered proteins or even how the network and therefore the proteins will evolve.\\
At first we want to look at random graphs and their subgraphs and their different properties. We especially focus on the degree distributions which are calculated and plotted using \texttt{R} \cite{R}. We will also compare subgraphs which are constructed  in different ways and determine if this yields to different subgraphs. We finally look at a protein network to find out if it can be constructed with one of the presented methods. The figures of graphs throughout the work are done with Gephi \cite{Gephi}, a software for visualizing graphs and networks.\\
As an interesting intermezzo we generalize the results from \cite{Tetali} to weighted graphs.\\
We start in section \ref{graphs} by giving some basic definitions of graphs and random walks on graphs, as well as proving some relationships between graph theory and the theory of electrical networks. In section \ref{models} we introduce two models for constructing random graphs and analyze these graphs and their subgraphs concerning their degree distributions. We then investigate a protein network in section \ref{proteins} on the possibility of modeling it and finally in section \ref{outlook} we give an overview on further possibilities to model such protein networks.


\section{Graphs}\label{graphs}
In this section we want to present some basic definitions and properties of graphs which are found in \cite{Bollobas}. We will also introduce Markov chains on graphs and analyze some of their properties. Finally we look at graphs by interpreting them as electric networks to generalize the results from \cite{Tetali}.

\begin{dfn}
A {\em graph} $G$ is a pair of disjoint sets $(V,E)$ where $E\subseteq V\times V$ is consisting of unordered pairs of elements of $V$. The elements in $V$ are called {\em nodes}, the elements of $E$ {\em edges}.
We call two nodes $x,y\in V$ {\em neighbors} if $\{x,y\}\in E$ and denote this with $x\sim y$.
\end{dfn}
We will only consider finite graphs here. A graph is called {\em finite} if $V$ is only finitely large, i.e. $\#V<\infty$. For a finite graph with $\#V=n\in \N$ we denote $V=[n]:=\{1,\ldots,n\}$.\\ Since in some cases we have more than one graph we will then denote $V$ with $V(G)$ and $E$ with $E(G)$.

\begin{dfn}
Let $G=(V,E)$ be a graph with $V=[n]$ for some $n\in\N$. Its {\em adjacency matrix} is then an $n\times n$-matrix $A=(a_{xy})_{x,y\in V}$ where
\[a_{xy}:=\1_{\{\{x,y\}\in E\}}:=\left\{\begin{array}{ll} 1, & \text{ if } \{x,y\}\in E, \\ 0, & \text{ otherwise.} \end{array}\right.\]
\end{dfn}

\begin{dfn}
Let $G=(V,E)$ be a graph. The {\em degree} of a node $x\in V$ is given by
\[D(x):=\sum_{y\in V}\1_{\{\{x,y\}\in E\}} =\sum_{y\in V : y\sim x}1=: \sum_{y\sim x}1.\]
\end{dfn}
By the definition of the adjacency matrix it follows directly for the degree of any node $x$ that
\[D(x)=\sum_{y\in V}a_{xy}.\]
\begin{dfn}
A {\em path} $P=(V(P),E(P))$ is a subgraph of $G=(V(G),E(G))$ with edge set $E(P)=\{\{x_0,x_1\},\ldots,\{x_{k-1},x_k\}\} \subseteq E(G)$ and vertex set $V(P)=\{x_0,\ldots,x_k\}\subseteq V(G)$ . The length of $P$ is given by the number of edges $k=\#E(P)$ it contains .
\end{dfn}
A path from $x$ to $y$ is a path with $x_0=x$ and $x_k=y$. Let $G=(V,E)$ be a graph and $x,y\in V$ we then say that $x$ and $y$ are in the same component of $G$, if there exists a path from $x$ to $y$.\\
If the graph $G$ only consists of one component, we call it {\em connected}.

We now generalize our definitions to weighted graphs.
\begin{dfn}
A {\em weighted graph} $G=(V,E,C)$ is a graph where we assign a weight $c_{xy}\in [0,\infty)$ to every pair $(x,y)\in V\times V$. We want the weights to be symmetric, hence $c_{xy}=c_{yx}$. The set of edges is then given by
\[E:=\{\{x,y\}\in V\times V: c_{xy}>0\}.\]
\end{dfn}
The weights $(c_{xy})_{x,y\in V}$, called conductances, give us analogously to the adjacency matrix the conductance matrix $C$ of the graph.
\begin{dfn}
Let $G=(V,E,C)$ be a weighted graph with edge weights $(c_{xy})_{x,y\in V}$, then the {\em conductance matrix} $C$ of $G$ is given by
\[C=(c_{xy})_{x,y\in V}.\]
\end{dfn}
Obviously the conductance matrix is symmetric. It is also possible to give an adjacency matrix for weighted graphs where $a_{xy}=\1_{\{c_{xy}>0\}}$ for all $x,y\in V$.
\begin{dfn}
Let $x\in V$ be a node of the weighted graph $G=(V,E,C)$ then the {\em generalized degree} of $x$ -- or {\em weight of node $x$} -- is given by
\[\mu_x:=\sum_{y\in V}c_{xy}=\sum_{y\sim x}c_{xy}.\]
\end{dfn}
If $c_{xy}\in\{0,1\}$ for all $x,y\in V$ the graph is not weighted and it holds $\mu_x=D(x)$ for all $x\in V$.

\subsection{Properties of graphs and Markov chains on graphs}
In the following we will consider a weighted graph $G=(V,E,C)$ with $V=[n], \ E\subseteq V\times V$, where $\#E:=m$, and conductance matrix $C$.
\begin{dfn}
For all $x\in V$ let $\mu_x$ be the generalized degree of $x$. The {\em degree distribution} of $G$ is then given by
\[\PX(\mu_x \geq s)=\frac{1}{n} \sum_{y\in V} \1_{\{\mu_y\geq s\}}.\]
\end{dfn}
We call the degree distribution of $G$ respectively the graph $G$ itself scale free if for $s\in [0,\infty)$, some $c_n\in (0,\infty)$ and some $\tau>1$ it holds that
\[\PX(\mu_x \geq s)\asymp c_ns^{-(\tau-1)}.\]
Equivalently we can look at the total number of nodes with degree $s$ or more denoted by $N_{\geq s}$ and get
\[N_{\geq s} \asymp \tilde c_ns^{-(\tau -1)} \text{ where } \tilde c_n\in (0,\infty).\]

For unweighted graphs it is sufficient to look at the number of nodes with exactly degree $k\in \N_0$ and the scale free property simplifies to $N_k\asymp \hat c_n k^{-\tau}$ for some $\hat c_n\in (0,\infty)$.
\begin{dfn}
Let $G=(V,E,C)$ be a weighted graph with conductance matrix $C$ and $(X_n)_{n\in \N_0}$ a homogeneous Markov chain with state space $V$. We then call $(X_n)_{n\in \N_0}$ {\em Markov chain on $G$} if its transition matrix $P=(p_{xy})_{x,y\in V}$ is for all $n\in \N,\ x,y\in V$ given by
\[p_{xy}=\PX(X_n=y|X_{n-1}=x):=\frac{c_{xy}}{\sum_{z\sim x}c_{xz}}=\frac{c_{xy}}{\mu_x}.\]
\end{dfn}
Since the initial distribution of the Markov chain is not important here we will not worry about it.

\begin{dfn}
The {\em hitting time of $y\in V$} for the Markov chain $(X_n)_{n\in \N_0}$ on $G$ is defined as 
\[\tau_y:=\inf\{n\geq 0: X_n=y\}.\]
\end{dfn}

\begin{dfn}
Let $G=(V,E,C)$ be a connected graph with $V=[n]$, $(X_n)_{n\in \N_0}$ a Markov chain on $G$ and $x\in V$. We then call
\[\E^x(\tau_y):=\E(\tau_y|X_0=x)\]
the {\em expected hitting time of $y$ with start in $x$}.
\end{dfn}
For every $x\in V$ it obviously holds that $\E^x(\tau_x)=0$.

\begin{lemma}\label{mvp}
For $x\not=y$ the expected hitting time of $y$ with start in $x$ satisfies
\[\E^x(\tau_y)=1+\sum_{z\sim x}p_{xz}\E^z(\tau_y).\]
\end{lemma}
\begin{proof} By using the Markov property for the fourth equality we get
\begin{eqnarray*}
\E^x(\tau_y)&=&\E(\tau_y\vert X_0=x)=\sum_{k\geq 1}k\PX(\tau_y=k\vert X_0=x)\\
&=&\sum_{k\geq 1}k\sum_{z\sim x}\frac{\PX(\tau_y=k, X_0=x, X_1=z)}{\PX(X_0=x)}\frac{\PX(X_0=x,X_1=z)}{\PX(X_0=x,X_1=z)}\\
&=&\sum_{k\geq 1}k\sum_{z\sim x}\PX(\tau_y=k\vert X_0=x, X_1=z)\PX(X_1=z\vert X_0=x)\\
&=&\sum_{z\sim x} p_{xz}\sum_{k\geq 1}k\PX(\tau_y=k-1\vert X_0=z)\\
&=&\sum_{z\sim x} p_{xz} \E^z(\tau_y+1)=1+\sum_{z\sim x} p_{xz}\E^z(\tau_y).
\end{eqnarray*}
\end{proof}
With the boundary condition $\E^y(\tau_y)=0$ and Lemma \ref{mvp} we get $T^y_x:=\E^x(\tau_y)$ as the solution of
\begin{equation}\label{LGS}
\left(\begin{array}{c}T^y_1\\ \vdots \\ T^y_n \end{array}\right) = \left(\begin{array}{ccc}p_{11}&\cdots& p_{1n}\\ \vdots&\ddots&\vdots \\ p_{n1} & \cdots & p_{nn} \end{array}\right) \cdot \left(\begin{array}{c}T^y_1\\ \vdots \\ T^y_n\end{array}\right) + \left(\begin{array}{c} 1\\ \vdots \\ 1\end{array}\right).
\end{equation}
Let $T^y:=(T^y_1,\ldots,T^y_n)^T, \ \underline 1:=(1,\ldots,1)^T\in \mathbb{R}^n$, then we can write equation (\ref{LGS}) as
\[T^y=P\cdot T^y +\underline1\ \Leftrightarrow \ (P-I_n)\cdot T^y =-\underline 1,\]
hence the expected hitting times are the solution of an inhomogeneous system of linear equations.

By the linearity of expectations we can determine the commute time between two nodes. Let $\tau_x^y\coloneqq\inf\{l\geq 0 : X_l=x \ \text{and}\ \exists k\leq l \ \text{with} \ X_k=y\}$ be the time the the Markov chain $(X_n)_{n\in \N_0}$ needs to reach node $y$ and then node $x$. The expected commute time $\E^x(\tau_x^y)$ is then given by
\[\E^x(\tau_x^y)=\E^x(\tau_y)+\E^y(\tau_x).\]

\subsection{Graphs as electrical networks}
As a short intermezzo we want to generalize the results of Tetali '91 \cite{Tetali} to weighted graphs. Therefore we want to consider the connected graph $G=(V,E,C)$ as an electrical network with $n$ nodes and $m$ edges. The graph itself is still undirected, even though at some points we will look at directed edges since the current on every edge is only flowing in one direction. When the direction of an edge is important we will consider $(x,y)$ and $(y,x)$ as two different edges. The conductances $(c_{xy})_{x,y\in V}$ are the upper threshold for the current on an edge and $r_{xy}=c^{-1}_{xy}$ is the resistance of the edge $\{x,y\}$. The weight of a node is still the sum over all conductances of incident edges to the node, hence
\[\mu_x=\sum_{y\sim x} c_{xy}.\]
For two adjacent nodes $x$ and $y$ we call $i_{xy}$ the current flowing from $x$ to $y$. Let $x,y\in V$ be two nodes we then call $V_x$ the potential of $x$ and $V_y$ the potential of $y$. The potential between $x$ and $y$ is given by $V_{xy}:=V_x-V_y$ and for $w,z\in V$ it holds
\begin{equation}\label{Diff}V_{wz}=V_w-V_z+V_y-V_y=V_w-V_y-(V_z-V_y)=V_{wy}-V_{zy}.\end{equation}
By Ohm's law, see for example \cite{Hayt}, we have
\begin{equation}\label{Ohm}V_{xy}=r_{xy}i_{xy} \text{ for } x\sim y,\end{equation}
and Kirchhoff's first law states that the current flowing into an inner node of the network is the same as the one flowing out of it, hence if the potential lies on $x$ and $y$ it holds for all $z\in V\setminus\{x,y\}$ that
\begin{equation}\label{Kirch}\sum_{w\sim z} i_{wz}=0.\end{equation}
We now can proof the following lemma.
\begin{lemma}\label{Kirch1}
Let $G=(V,E,C)$ be an electrical network with potentials $V_x>0$ in $x$ and $V_y=0$ in $y$, hence current flowing from $x$ to $y$. Then for all nodes $z\in V\setminus \{x,y\}$
\[V_{zy}=\sum_{w\in V} p_{zw}V_{wy}.\]
\end{lemma}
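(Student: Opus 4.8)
The plan is to derive the identity directly from the two physical laws recorded above, Ohm's law (\ref{Ohm}) and Kirchhoff's first law (\ref{Kirch}), together with the potential decomposition (\ref{Diff}). The hypothesis $z\in V\setminus\{x,y\}$ is exactly what makes $z$ an \emph{inner} node of the network, which is precisely the situation in which Kirchhoff's law (\ref{Kirch}) applies at $z$; this is why the claim must be restricted to such $z$, and the identity would generally fail at the two nodes $x,y$ carrying the fixed potentials.

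First I would write Kirchhoff's law at the inner node $z$, namely $\sum_{w\sim z}i_{wz}=0$, and convert every current into a potential difference by Ohm's law. Since $r_{wz}=c_{wz}^{-1}$, equation (\ref{Ohm}) gives $i_{wz}=c_{wz}V_{wz}$, so that
\[\sum_{w\sim z}c_{wz}V_{wz}=0.\]
Using the symmetry $c_{wz}=c_{zw}$ and the decomposition (\ref{Diff}) in the form $V_{wz}=V_{wy}-V_{zy}$, I would substitute to obtain $\sum_{w\sim z}c_{zw}(V_{wy}-V_{zy})=0$. Separating the two sums and pulling the constant $V_{zy}$ out of the second one yields $\sum_{w\sim z}c_{zw}V_{wy}=V_{zy}\sum_{w\sim z}c_{zw}$. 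The inner sum on the right is precisely the generalized degree $\mu_z=\sum_{w\sim z}c_{zw}$, so dividing by $\mu_z$ and recognizing the transition probabilities $p_{zw}=c_{zw}/\mu_z$ gives
\[V_{zy}=\sum_{w\sim z}\frac{c_{zw}}{\mu_z}V_{wy}=\sum_{w\sim z}p_{zw}V_{wy}.\]
Finally, since $p_{zw}=0$ whenever $w\not\sim z$, the restricted sum equals the full sum $\sum_{w\in V}p_{zw}V_{wy}$, which is the claimed formula.

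I do not expect a genuinely hard step here; the computation is short and linear, so the interest of the statement lies in its consequence (it identifies the potentials $V_{\cdot y}$ with a harmonic function for the Markov chain on $G$) rather than in its difficulty. The only points demanding care are bookkeeping ones: keeping the current direction and sign conventions consistent when passing from Kirchhoff's law to Ohm's law, applying (\ref{Diff}) with the index $y$ held fixed as the common reference node, and noting that the extension of the sum from the neighbors of $z$ to all of $V$ implicitly uses the absence of self-loops, so that $p_{zz}=0$ and no spurious $w=z$ term is introduced.
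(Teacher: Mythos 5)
Your proposal is correct and follows essentially the same route as the paper's own proof: both apply Kirchhoff's first law at the inner node $z$, convert currents to potential differences via Ohm's law, expand with the decomposition (\ref{Diff}) relative to the reference node $y$, recognize the generalized degree $\mu_z$, and divide to obtain the transition probabilities $p_{zw}$. The only (immaterial) difference is your sign convention of summing currents into $z$ rather than out of it, which changes nothing since the sum is zero.
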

\begin{proof}
By Kirchhoff's first law (\ref{Kirch}) and Ohm's law (\ref{Ohm}) we get
\begin{eqnarray*}
0&\overset{(\ref{Kirch})}{=}&\sum_{w\sim z}i_{zw}\overset{(\ref{Ohm})}{=}\sum_{w\sim z}\frac{V_{zw}}{r_{zw}}\\
&=&\sum_{w\sim z}V_{zw}c_{zw}\overset{(\ref{Diff})}{=}\sum_{w\sim z}(V_{zy}-V_{wy})c_{zw}\\
&=&\sum_{w\sim z}V_{zy}c_{zw}-\sum_{w\sim z}V_{wy}c_{zw}\\
&=&V_{zy}\mu_z-\sum_{w\sim z}V_{wy}c_{zw}.
\end{eqnarray*}
By solving for $V_{zy}$ we get the desired statement
\[V_{zy}=\sum_{w\sim z}V_{wy}\frac{c_{zw}}{\mu_z}=\sum_{w\in V}p_{zw}V_{wy}.\]
\end{proof}
We now want to draw a connection between random walks on graphs and electrical networks. We therefore define a random walk on $G$ from $x$ to $y$ as the stopped Markov chain with start in $x$ which is stopped when reaching $y$. Hence let $(X_n)_{n\in \N_0}$ be a Markov chain on $G$, $\tau_y$ the hitting time of $y$ and
\[N_z^{xy}:=\sum_{k=0}^{\tau_y-1}\1_{\{X_k=z|X_0=x\}}=\sum_{k=1}^{\tau_y-1}\1_{\{X_k=z|X_0=x\}}+\1_{\{z=x\}}, \ z\in V\setminus \{y\}\]
the number of visits in $z$ of a random walk from $x$ to $y$.

\begin{lemma}
Let $U_z^{xy}:=\E(N_z^{xy})$ then for all $z\in V\setminus\{x,y\}$ we have
\[U_z^{xy}=\sum_{w\in V}U_w^{xy}p_{wz}.\]
\end{lemma}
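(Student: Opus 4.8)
The plan is to count the visits to $z$ via the transitions that produce them and then peel off a single step using the Markov property, paralleling the computation in Lemma \ref{mvp}. The starting observation is that since $z\neq x$ the walk is never at $z$ at time $0$, and since $z\neq y$ it is never at $z$ at the hitting time $\tau_y$; hence every visit to $z$ counted by $N_z^{xy}$ occurs at some time $k\geq 1$ and arises from a transition $X_{k-1}=w\to X_k=z$. I would therefore rewrite (conditioning on $X_0=x$ throughout)
\[N_z^{xy}=\sum_{k=1}^{\tau_y}\1_{\{X_k=z\}}=\sum_{w\in V}\sum_{k=0}^{\tau_y-1}\1_{\{X_k=w,\,X_{k+1}=z\}},\]
the last equality being a reindexing together with the decomposition $\1_{\{X_k=z\}}=\sum_{w\in V}\1_{\{X_{k-1}=w,\,X_k=z\}}$.

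Next I would take expectations and interchange $\E$ with both sums, which is legitimate since all summands are nonnegative (monotone convergence). It then remains to identify each inner expectation. Writing $\sum_{k=0}^{\tau_y-1}(\cdots)=\sum_{k\geq 0}\1_{\{\tau_y>k\}}(\cdots)$ and noting that the event $A_k:=\{X_k=w,\ \tau_y>k\}$ depends only on $X_0,\ldots,X_k$ --- because $\{\tau_y>k\}=\{X_0\neq y,\ldots,X_k\neq y\}$ --- I would condition on $\mathcal F_k:=\sigma(X_0,\ldots,X_k)$ and apply the Markov property to obtain
\[\E\big[\1_{A_k}\1_{\{X_{k+1}=z\}}\big]=\E\big[\1_{A_k}\,\PX(X_{k+1}=z\mid X_k)\big]=p_{wz}\,\PX(X_k=w,\ \tau_y>k),\]
using that $X_k=w$ on $A_k$.

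Summing over $k$ collapses the right-hand side to $p_{wz}\sum_{k\geq 0}\PX(X_k=w,\ \tau_y>k)=p_{wz}\,U_w^{xy}$, and summing over $w$ yields $U_z^{xy}=\sum_{w\in V}U_w^{xy}p_{wz}$, as claimed. The one place requiring genuine care --- the main obstacle --- is this conditioning step: one must verify that $\{\tau_y>k\}$ is $\mathcal F_k$-measurable so that $\1_{A_k}$ may be pulled out of the conditional expectation. This measurability is exactly what lets the elementary one-step Markov property suffice, with no appeal to the strong Markov property, and the hypothesis $z\neq x,y$ is precisely what guarantees that the boundary terms at time $0$ and at time $\tau_y$ do not spoil the transition-counting identity in the first step.
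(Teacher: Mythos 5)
Your proof is correct, and it is in fact tighter than the paper's own argument. The paper also peels off the last transition into $z$, but it does so by conditioning on the event $\{\tau_y=n\}$: it writes $U_z^{xy}=\sum_n\E\bigl(\sum_{k=1}^{\tau_y}\1_{\{X_k=z\}}\bigm|\tau_y=n\bigr)\PX(\tau_y=n)$, then replaces each conditional expectation $\E\bigl(\1_{\{X_k=z\}}\bigm|\tau_y=n\bigr)$ by the unconditional quantity $p^{(k)}_{xz}$, applies the Chapman--Kolmogorov decomposition $p^{(k)}_{xz}=\sum_{w}p^{(k-1)}_{xw}p_{wz}$, and finally undoes the same replacement to reassemble $U_w^{xy}$. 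That replacement step is not justified as written: conditionally on $\tau_y=n$ the walk is constrained to avoid $y$ up to time $n-1$, so $\E\bigl(\1_{\{X_k=z\}}\bigm|\tau_y=n\bigr)\neq p^{(k)}_{xz}$ in general, and the paper's computation only closes because the same unjustified exchange is performed twice in opposite directions, so the errors cancel. Your route avoids conditioning on the value of $\tau_y$ altogether: the pathwise decomposition $\1_{\{X_k=z\}}=\sum_{w}\1_{\{X_{k-1}=w,\,X_k=z\}}$, the rewriting of the random upper limit as $\sum_{k\geq0}\1_{\{\tau_y>k\}}(\cdots)$, and the observation that $\{\tau_y>k\}\in\mathcal{F}_k$ let the elementary one-step Markov property do all the work, which is the standard rigorous way to handle sums up to a stopping time. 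What the paper's route buys is brevity and the explicit appearance of $k$-step transition probabilities; what yours buys is an airtight argument, at the modest cost of making the measurability of $\{\tau_y>k\}$ explicit.
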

\begin{proof}
Since $z\not=x,y$ for $N_z^{xy}$ it holds
\begin{eqnarray*}
N_z^{xy}&=&\sum_{k=1}^{\tau_y-1}\1_{\{X_k=z\vert X_0=x\}}+\1_{\{z=y\}}=\sum_{k=1}^{\tau_y-1}\1_{\{X_k=z\vert X_0=x\}}+\1_{\{X_{\tau_y}=z\vert X_0=x\}}\\
&=&\sum_{k=1}^{\tau_y}\1_{\{X_k=z\vert X_0=x\}}.
\end{eqnarray*}
With that and our notation $U_z^{xy}= \E(N_z^{xy})$ we get
\begin{eqnarray*}
U_z^{xy}&=& \E\left(\sum_{k=1}^{\tau_y}\1_{\{X_k=z\vert X_0=x\}}\right)\\
&=&\sum_{n=0}^{\infty}\E\left(\left.\sum_{k=1}^{\tau_y}\1_{\{X_k=z\vert X_0=x\}}\right\vert \tau_y=n\right)\PX(\tau_y=n)\\
 &=&\sum_{n=0}^{\infty}\sum_{k=1}^{n}\E\left(\1_{\{X_k=z\vert X_0=x\}}\right)\PX(\tau_y=n)=\sum_{n=0}^{\infty}\sum_{k=1}^{n}p_{xz}^{(k)}\PX(\tau_y=n)\\
&=&\sum_{n=0}^{\infty}\sum_{k=1}^{n}\sum_{w\in V}p_{xw}^{(k-1)}p_{wz}\PX(\tau_y=n) =\sum_{w\in V}p_{wz}\sum_{n=0}^{\infty}\sum_{k=0}^{n-1}p_{xw}^{(k)}\PX(\tau_y=n)\\
&=&\sum_{w\in V}p_{wz}\sum_{n=0}^{\infty}\sum_{k=0}^{n-1}\E\left(\1_{\{X_{k}=w\vert X_0=x\}}\right)\PX(\tau_y=n)\\
&=&\sum_{w\in V}p_{wz}\E\left( \sum_{k=0}^{\tau_y-1}\1_{\{X_{k}=w\vert X_0=x\}}\right) =\sum_{w\in V}p_{wz}U_w^{xy}.\\
\end{eqnarray*}
\end{proof}
By dividing both sides by $\mu_z$ we get
\[\frac{U_z^{xy}}{\mu_z}=\sum_{w\in V}\frac{U_w^{xy}}{\mu_w}\frac{c_{wz}}{\mu_z}\ \text{ for all } z\in V\setminus\{x,y\}.\]
With the property of lemma \ref{Kirch1} for the potential
\[V_{zy}=\sum_{w\in V}p_{zw}V_{wy}\]
and by choosing $V_{yy}=0,\ V_{xy}=\frac{U^{xy}_z}{\mu_x}$ we get by the uniqueness of harmonic functions that
\[V_{zy}=\frac{U_z^{xy}}{\mu_z}\ \ \forall z\in V.\]
The current of an edge $(w,z)$ equals
\[i_{wz}=\frac{V_{wz}}{r_{wz}}=V_{wy}c_{wz}-V_{zy}c_{wz}=\frac{U_w^{xy}c_{wz}}{\mu_w}-\frac{U_z^{xy}c_{wz}}{\mu_z}=U_w^{xy}p_{wz}-U_z^{xy}p_{zw}\]
and is therefore the expected number of times the random walk traverses the edge $(w,z)$. A random walk starting in $x$ is leaving this node effectively one time and hence the total current leaving $x$ is $1$. Respectively the total current flowing into $y$ is also equal to $1$, which means there's a unit current flowing through the network:
\[\sum_{w\sim x}i_{xw}=1= \sum_{z\sim y}i_{zy}.\]
This yields by Ohm's law (\ref{Ohm}) that the effective resistance between $x$ and $y$ -- denoted by $R_{xy}$ -- is exactly the potential between these two nodes $V_{xy}$.
Let $U_w=U^{xy}_w+U^{yx}_w$ be the number of times a random walk starting from $x$ going to $y$ and returning to $x$ is visiting $w$. We then get
\[U_w=U_w^{xy}+U_w^{yx}=V_{wy}\mu_w-V_{wx}\mu_w=(V_{wy}+V_{xw})\mu_w=V_{xy}\mu_w=R_{xy}\mu_w,\]
where $U_w^{yx}=-V_{wx}\mu_w$ since the random walk is walking in the opposite direction in which the current flows.
We also can write the expected hitting time $\E^x(\tau_y)$ as the expected number of times a random walk from $x$ to $y$ visits every node in the network, hence
\[\E^x(\tau_y)=\sum_{w \in V}U^{xy}_w.\]
Then for the commute time between $x$ and $y$ it holds
\begin{eqnarray}
\label{RT}\E^x(\tau_x^y)&=&\E^x(\tau_y)+\E^y(\tau_x)=\sum_{w\in V}U^{xy}_w+\sum_{w\in V}U_w^{yx}\nonumber \\
&=&\sum_{w\in V}U_w=R_{xy}\sum_{w\in V}\mu_w.
\end{eqnarray}

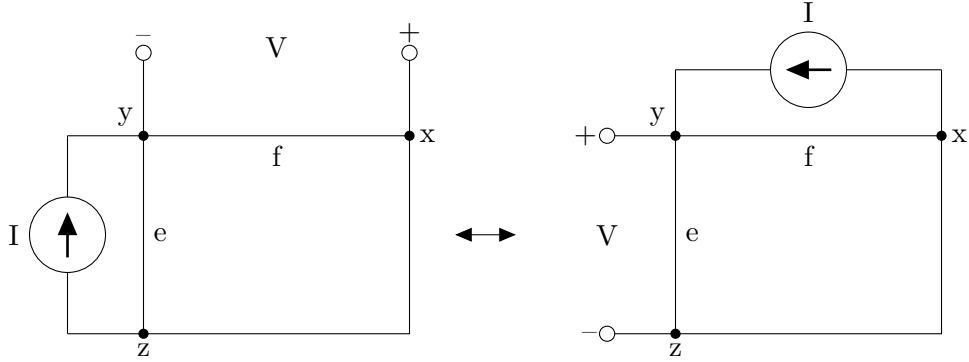
\begin{figure}[h]
\centering
\begin{tikzpicture}
\draw (0,0) rectangle +(3.5,2.625);
\draw (0,0)--(-1,0);
\draw (0,2.625)--(-1,2.625);
\draw (0,2.625)--(0,3.625);
\draw (3.5,2.625)--(3.5,3.625);
\draw (0,3.725) circle (1mm) node[above]{--};
\draw (3.5,3.725) circle (1mm)node[above]{+};
\draw (-1,0)--(-1,2.625);
\filldraw[fill=white] (-1,1.3125) circle (5mm);
\draw (-1.5,1.3125) node[left]{I};
\draw[triangle 45-,line width=1pt] (-1,1.6125)--(-1,1.0125);
\draw (1.75,3.8) node{V};
\draw (0,1.3125) node[right]{e};
\draw (1.75,2.625) node[below]{f};
\filldraw[fill=black] (3.5,2.625) circle(0.6mm) node[right]{x};
\filldraw[fill=black] (0,0) circle(0.6mm) node[below]{z};
\filldraw[fill=black] (0,2.625) circle(0.6mm) node[anchor=south east]{y};

\draw[triangle 45-triangle 45] (4.1,1.3125)--(4.9,1.3125);

\draw (7,0) rectangle +(3.5,2.625);
\draw (7,0)--(6.2,0);
\draw (7,2.625)--(6.2,2.625);
\draw (7,2.625)--(7,3.5);
\draw (10.5,2.625)--(10.5,3.5);
\draw (6.1,0) circle (1mm) node[left]{--};
\draw (6.1,2.625) circle (1mm) node[left]{+};
\draw (7,3.5)--(10.5,3.5);
\filldraw[fill=white] (8.75,3.5) circle (5mm);
\draw (8.75,4) node[above]{I};
\draw[triangle 45-,line width=1pt] (8.45,3.5)--(9.05,3.5);
\draw (6.1,1.3125) node{V};
\draw (7,1.3125) node[right]{e};
\draw (8.75,2.625) node[below]{f};
\filldraw[fill=black] (10.5,2.625) circle(0.6mm) node[right]{x};
\filldraw[fill=black] (7,0) circle(0.6mm) node[below]{z};
\filldraw[fill=black] (7,2.625) circle(0.6mm) node[anchor=south east]{y};
\end{tikzpicture}
\caption{Reciprocity in electrical networks (see \cite{Tetali}).}\label{reciprocity}
\end{figure}

The reciprocity in electrical networks gives us that the potential $V_{zy}$ for a current flowing between $x$ and $y$ is the same as the potential $V_{xy}$ if the current flows between $z$ and $y$ (see figure \ref{reciprocity}).
For random walks we get that the number of visits in $z$ proportional to its weight when walking from $x$ to $y$ is the same as the number of visits in $x$ proportional to its weight when walking from $z$ to $y$, hence
\[V_{zy}=\frac{U^{xy}_z}{\mu_z}=\frac{U^{zy}_x}{\mu_x}=V_{xy}.\]
With that we are able to proof our final theorem of this section.

\begin{thm}\label{commute}
Let $G=(V,E,C)$ be a finite graph with $n\in \N$ nodes. It then holds
\[\sum_{(x,y)\in E}\E^x(\tau_y)\frac{c_{xy}}{\sum_{(w,u)\in E}c_{wu}}=n-1.\]
\end{thm}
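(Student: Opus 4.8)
The plan is to reduce the weighted edge-sum to the commute-time formula already established in equation (\ref{RT}) and then to recognize the remaining quantity as Foster's network theorem. Throughout I read both sums as running over the directed edges, i.e. each undirected edge $\{x,y\}$ is taken in both orientations $(x,y)$ and $(y,x)$; this is the natural convention, since $\E^x(\tau_y)$ is itself directional. Writing $M:=\sum_{w\in V}\mu_w$, the denominator is exactly $\sum_{(w,u)\in E}c_{wu}=\sum_{w\in V}\mu_w=M$, so the claim is equivalent to $\sum_{(x,y)\in E}c_{xy}\E^x(\tau_y)=(n-1)M$.

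First I would group the directed sum into unordered edges. Pairing the two orientations and using the symmetry $c_{xy}=c_{yx}$ gives
\[\sum_{(x,y)\in E}c_{xy}\E^x(\tau_y)=\sum_{\{x,y\}\in E}c_{xy}\bigl(\E^x(\tau_y)+\E^y(\tau_x)\bigr).\]
The inner bracket is precisely the commute time $\E^x(\tau_x^y)$, so by (\ref{RT}) it equals $R_{xy}M$. Substituting and cancelling the common factor $M$ reduces the whole theorem to the single identity
\[\sum_{\{x,y\}\in E}c_{xy}R_{xy}=n-1,\]
the weighted form of Foster's theorem. Here connectedness of $G$, assumed throughout this subsection, is essential; for $k$ components the right-hand side would instead be $n-k$.

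The main obstacle is this last identity, which no longer follows formally from the hitting-time machinery and needs a genuinely new input. The cleanest route I would take is the Laplacian trace argument. Introduce the weighted Laplacian $L:=D-C$ with $D:=\mathrm{diag}(\mu_1,\dots,\mu_n)$, which admits the edge decomposition $L=\sum_{\{x,y\}\in E}c_{xy}(e_x-e_y)(e_x-e_y)^{\top}$, and recall the standard identity $R_{xy}=(e_x-e_y)^{\top}L^{+}(e_x-e_y)$, where $L^{+}$ is the Moore--Penrose pseudoinverse. Then
\[\sum_{\{x,y\}\in E}c_{xy}R_{xy}=\sum_{\{x,y\}\in E}c_{xy}\,\mathrm{tr}\!\left(L^{+}(e_x-e_y)(e_x-e_y)^{\top}\right)=\mathrm{tr}\!\left(L^{+}L\right),\]
and since $G$ is connected, $L$ has rank $n-1$ with kernel spanned by $\underline1$, so $L^{+}L$ is the orthogonal projection onto $\underline1^{\perp}$ and $\mathrm{tr}(L^{+}L)=n-1$.

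I would flag two points to verify carefully. First, the pseudoinverse formula for $R_{xy}$ must be justified within the paper's framework: the potential $V$ solving the harmonic problem of Lemma \ref{Kirch1} under a unit current between $x$ and $y$ satisfies $LV=e_x-e_y$, hence $V=L^{+}(e_x-e_y)$ up to an additive constant, so that $R_{xy}=V_{xy}=(e_x-e_y)^{\top}V$ matches the quadratic form above. Second, if one prefers to avoid the Laplacian entirely, an alternative is the probabilistic reading $c_{xy}R_{xy}=\PX(\{x,y\}\in T)$ for a uniform random spanning tree $T$, whence $\sum_{\{x,y\}\in E}c_{xy}R_{xy}=\E(\#T)=n-1$; this is conceptually closer to the random-walk viewpoint of the paper but requires the transfer-current theorem, which is heavier to set up than the trace computation.
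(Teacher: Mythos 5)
Your proposal is correct, and it diverges from the paper's proof in an interesting way. The reduction step is shared: like the paper, you pair the two orientations of each edge, identify $\E^x(\tau_y)+\E^y(\tau_x)$ as the commute time, invoke equation (\ref{RT}) to replace it by $R_{xy}\sum_{w\in V}\mu_w$, and thereby boil the theorem down to the weighted Foster identity $\sum_{\{x,y\}\in E}c_{xy}R_{xy}=n-1$ (the paper runs this chain in the opposite direction, but it is the same computation). The difference is how that identity is established. The paper proves it probabilistically, inside the random-walk framework it has built: by the reciprocity principle, $\sum_{x\sim y}U_z^{xy}c_{xy}/\mu_z=\sum_{x\sim y}U_x^{zy}p_{xy}$ is the expected number of entries into $y$ by a walk started at $z$, which equals $\1_{\{z\neq y\}}$; summing over $y$ gives $n-1$, and the identification $U_z^{xy}+U_z^{yx}=R_{xy}\mu_z$ converts this into Foster's identity. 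You instead prove it algebraically, via $L=D-C=\sum_{\{x,y\}\in E}c_{xy}(e_x-e_y)(e_x-e_y)^{\top}$, the pseudoinverse formula $R_{xy}=(e_x-e_y)^{\top}L^{+}(e_x-e_y)$, and $\mathrm{tr}(L^{+}L)=n-1$. Each route has its merits: the paper's argument needs no machinery beyond what the section develops, but it leans on the reciprocity principle, which the paper justifies only by physical analogy; your trace argument imports the Laplacian pseudoinverse (and you correctly note that the formula $LV=e_x-e_y$ must be checked against Lemma \ref{Kirch1} and Kirchhoff's law at the poles), but in exchange it is fully rigorous linear algebra and makes transparent exactly where connectedness enters --- the rank of $L$ --- a hypothesis the theorem statement omits but which both proofs silently require, since otherwise the hitting times are infinite.
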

\begin{proof}
By using the reciprocity we get
\[\sum_{x\sim y}\frac{U_z^{xy}c_{xy}}{\mu_z}=\sum_{x\sim y}\frac{U_x^{zy}c_{xy}}{\mu_x}=\sum_{x\sim y}U_x^{zy}p_{xy}=\left\{\begin{array}{ll}1& \text{ for } z\not=y\\ 0& \text{ for } z=y\end{array}\right.,\] 
since the expected number of times $y$ is reached from one of its neighbors is exactly $1$ if the random walk does not start in $y$. Summing over all possible terminal nodes $y$ yields
\[\sum_{y\in V}\sum_{x\sim y}\frac{U_z^{xy}c_{xy}}{\mu_z}=\sum_{y \in V}\1_{\{y\not=z\}}=n-1,\ z\in V.\]
We can simplify this by considering two random walks, one going from $x$ to $y$ and the other on going from $y$ to $x$. This gives us
\begin{eqnarray*}
n-1&=&\sum_{y\in V}\sum_{x\sim y}\frac{U_z^{xy}c_{xy}}{\mu_z}=\sum_{(x,y)\in E}\frac{U_z^{xy}c_{xy}}{\mu_z}=\sum_{\{x,y\}\in E}\left(\frac{U_z^{xy}c_{xy}}{\mu_z}+\frac{U_z^{yx}c_{yx}}{\mu_z}\right)\\
&=&\sum_{\{x,y\}\in E}\frac{U_z}{\mu_z}c_{xy}=\sum_{\{x,y\}\in E}\frac{R_{xy}\mu_z}{\mu_z}c_{xy}=\sum_{\{x,y\}\in E}R_{xy}c_{xy}.
\end{eqnarray*}
From equation (\ref{RT}) we know
\[\E^x(\tau_x^y)=R_{xy}\sum_{w\in V}\mu_w.\]
By multiplication with $c_{xy}$ and summing over all edges we get
\[\sum_{\{x,y\}\in E}\E^x(\tau_x^y)c_{xy}=\left(\sum_{w\in V}\mu_w\right)\cdot\left(\sum_{\{x,y\}\in E}R_{xy}c_{xy}\right)=(n-1)\sum_{w\in V}\mu_w,\]
which yields
\[\frac{\sum\limits_{\{x,y\}\in E}\E^x(\tau_x^y)c_{xy}}{\sum\limits_{w\in V}\mu_w}=n-1.\]
If we finally consider directed edges the desired equation follows:
\begin{eqnarray*}
n-1&=&\sum\limits_{\{x,y\}\in E}\E^x(\tau_x^y)\frac{c_{xy}}{\sum\limits_{w\in V}\sum\limits_{u\sim w}c_{wu}}\\
&=&\sum\limits_{\{x,y\}\in E}\left(\E^x(\tau_y)\frac{c_{xy}}{\sum\limits_{(w,u)\in E}c_{wu}}+\E^y(\tau_x)\frac{c_{yx}}{\sum\limits_{(w,u)\in E}c_{wu}}\right)\\
&=&\sum\limits_{(x,y)\in E}\E^x(\tau_y)\frac{c_{xy}}{\sum\limits_{(w,u)\in E}c_{wu}}.
\end{eqnarray*}
\end{proof}
For unweighted graphs the statement of Theorem \ref{commute} simplifies to
\[\sum\limits_{(x,y)\in E}\E^x(\tau_y)\frac{1}{2m}=n-1,\]
where $m$ is the number of edges in the graph.

\section{Models}\label{models}
There are different possibilities for modeling networks. We consider two models in order to analyze the resulting graphs. Firstly the \ER\ model in which every two nodes are independently of each other connected with the same probability and secondly the \PA \ model where the probability of two nodes being connected depends on the current degrees of the nodes.\\
We also take a look at subgraphs of those random graphs in order to analyze if and how certain properties are inherited from the original graph. This is useful when considering networks where not the whole network is known like in the case of the protein network in Section 4.

\subsection{The \ER\ graph}

\begin{figure}[t]
\centering
\includegraphics[width=4cm]{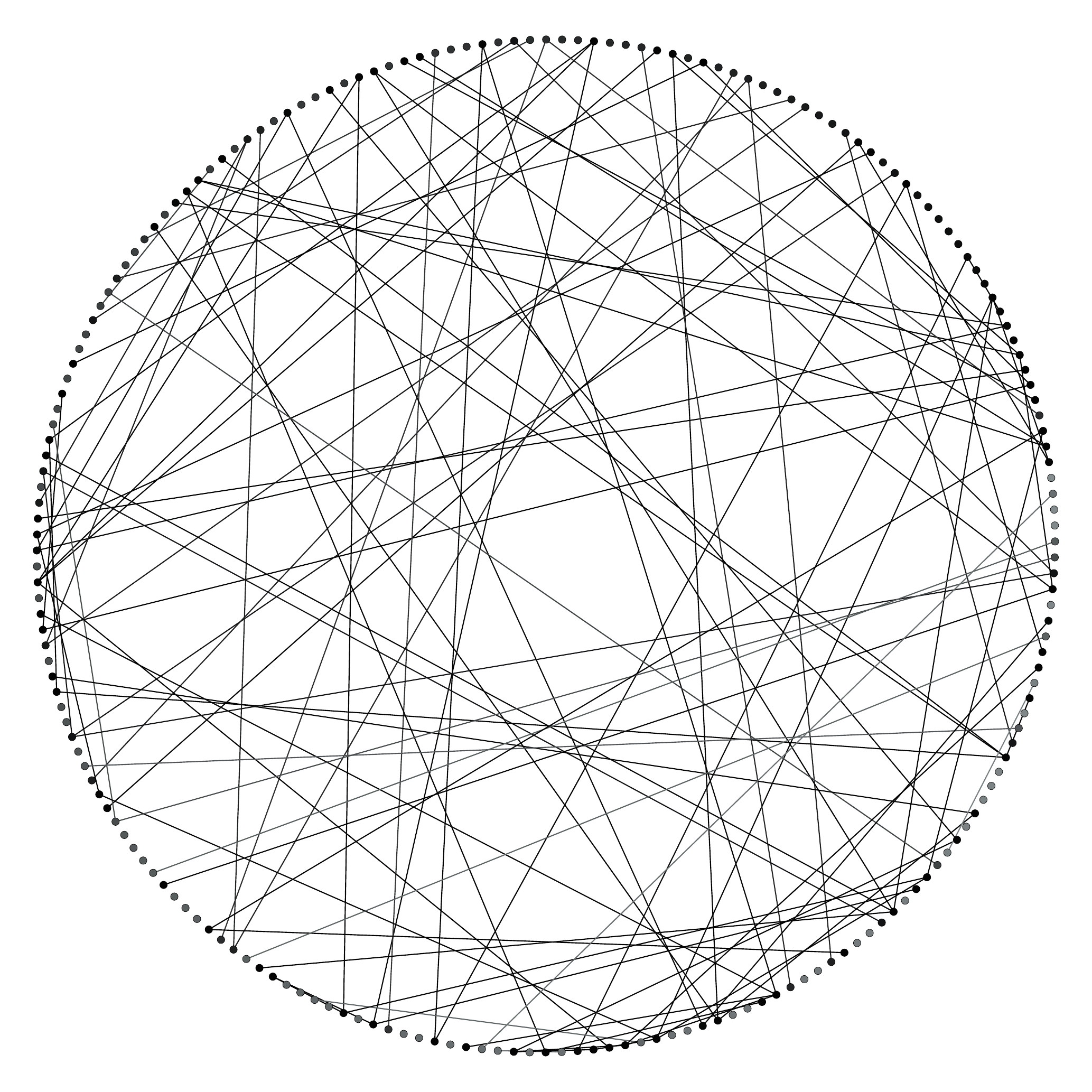}
\includegraphics[width=4cm]{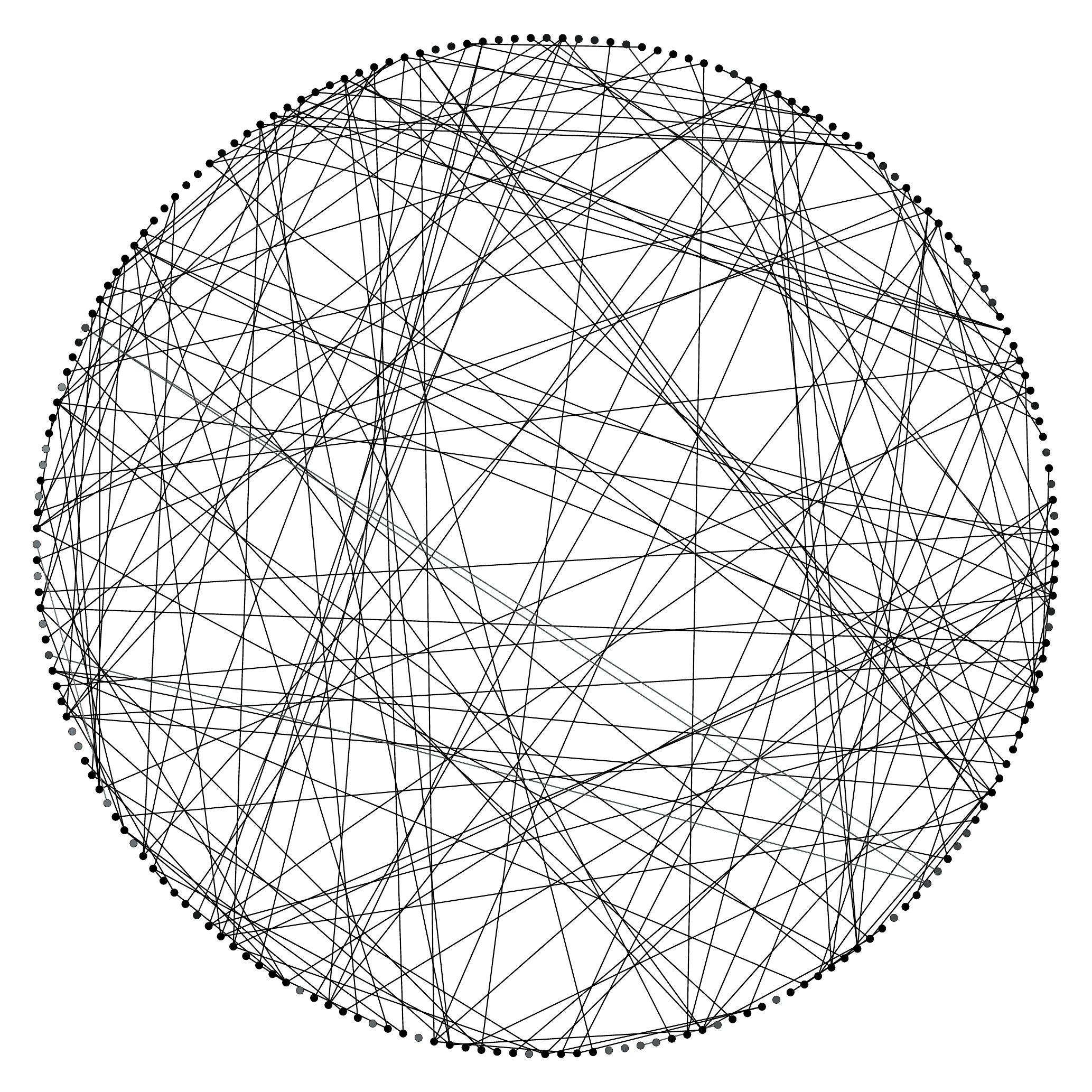}
\includegraphics[width=4cm]{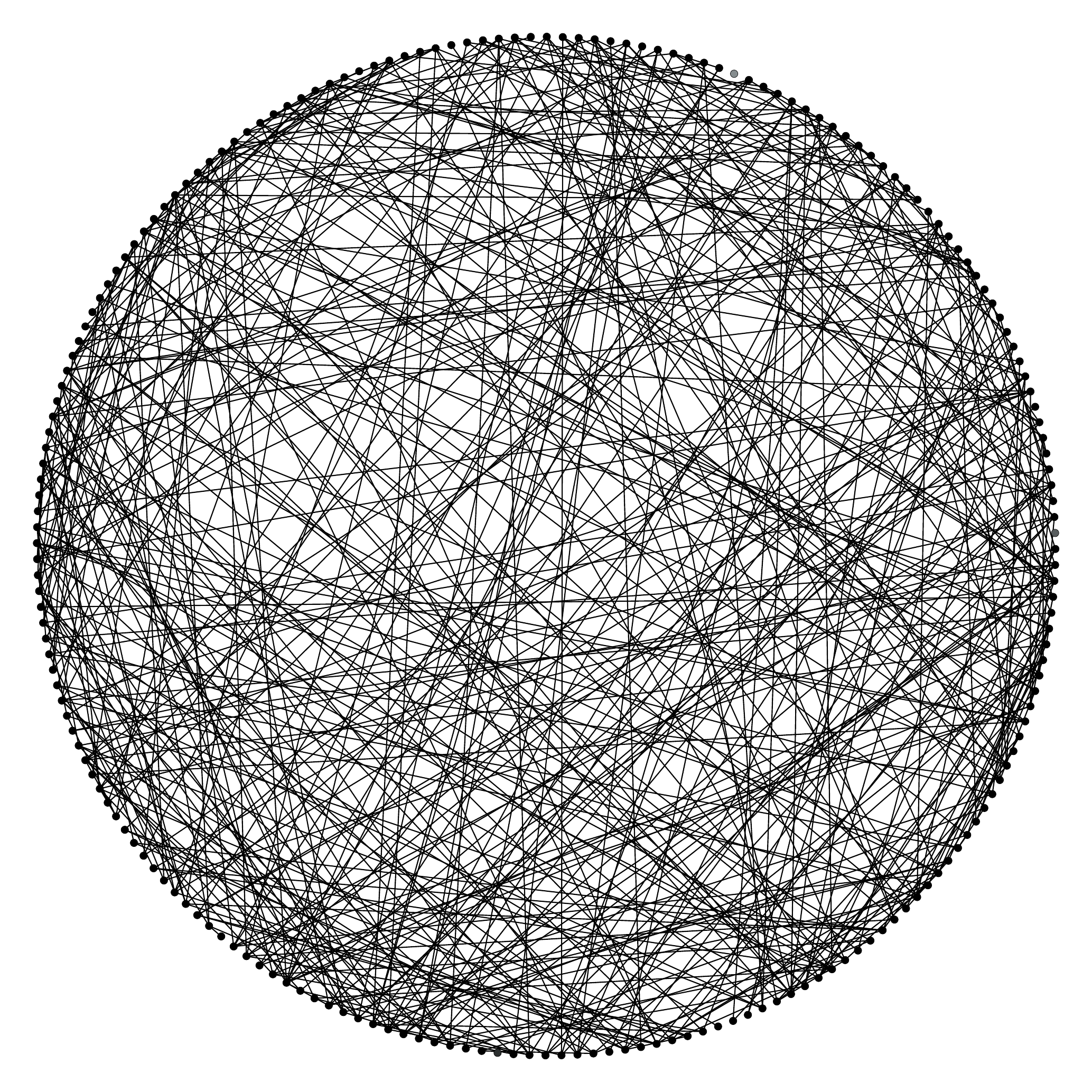}
\caption{Three \ER \ graphs with $n=200$ nodes and edge probabilities $p=\frac{1}{200},\frac{2}{200},\frac{5}{200}$ (left to right).}\label{ERNW}
\end{figure}

An \ER \ graph is a graph $G=(V,E)$ with $V=[n],n \in \N$. Let $(Y_{ij})_{1\leq i< j\leq n}$ be i.i.d. random variables with $Y_{12}\sim Bin(1,p), p\in [0,1]$. The edge set is then given by $E=\{\{i,j\}\in V\times V :  Y_{ij}=1\}.$ Let 
\[X_{ij}\coloneqq \left\{\begin{array}{ll} Y_{ij} & \text{if }i<j\\ 0 & \text{if }i=j \\ Y_{ji} & \text{if }i>j,\end{array}\right. \]
then the adjacency matrix $A$ of $G$ is given by $A=(X_{ij})_{i,j\in V}$. In figure \ref{ERNW} are three different \ER\ graphs.
By above construction it is obvious that the degree distribution of the \ER \ graph is again a binomial distribution, hence $D(i)\sim Bin(n-1,p)$ for all $i\in [n]$. Further we have the mean degree in an \ER \ graph given as $\E (D(i))=(n-1)p$.

\begin{figure}[p]
\centering
\includegraphics[width=6cm]{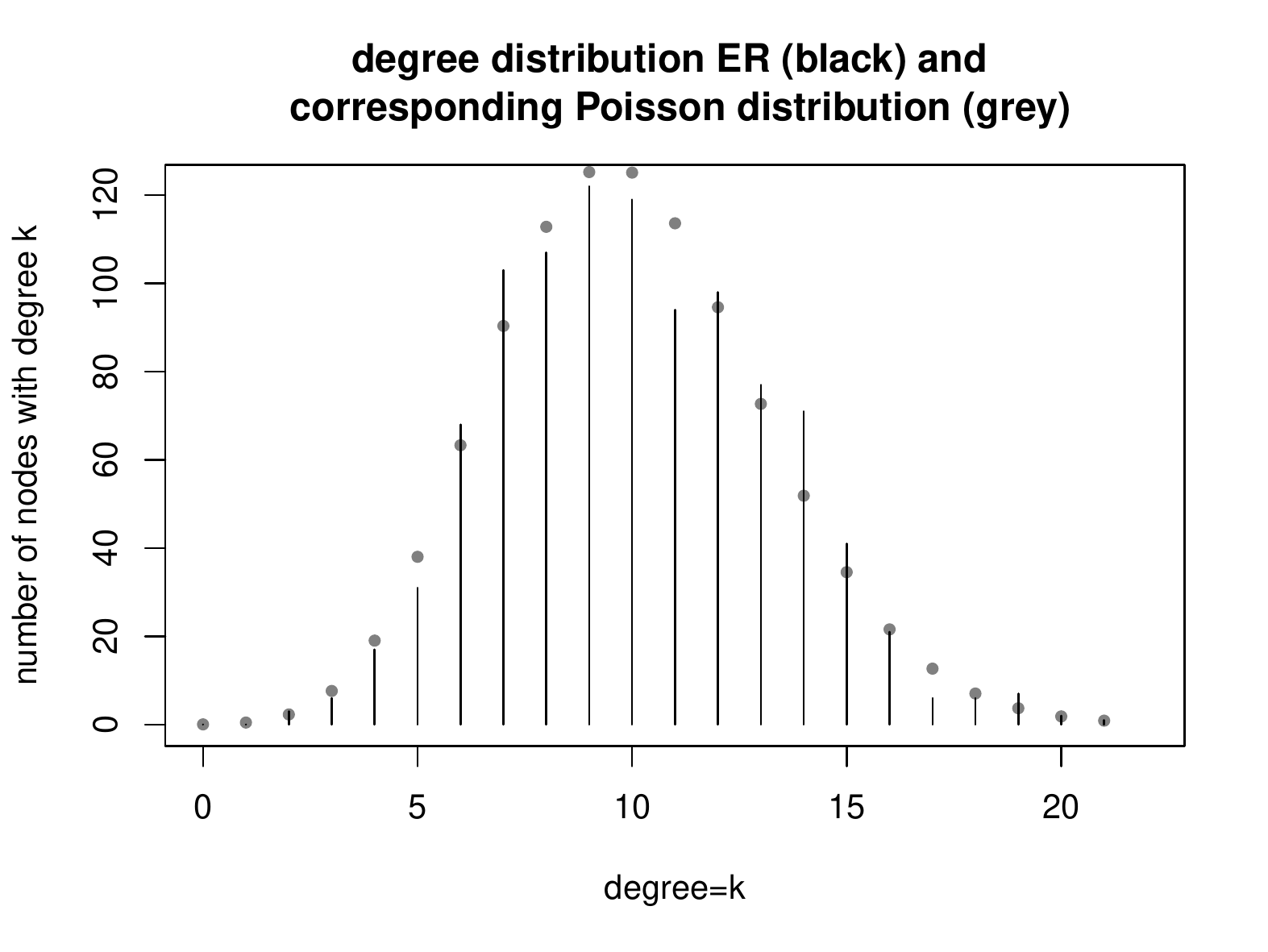}
\includegraphics[width=6cm]{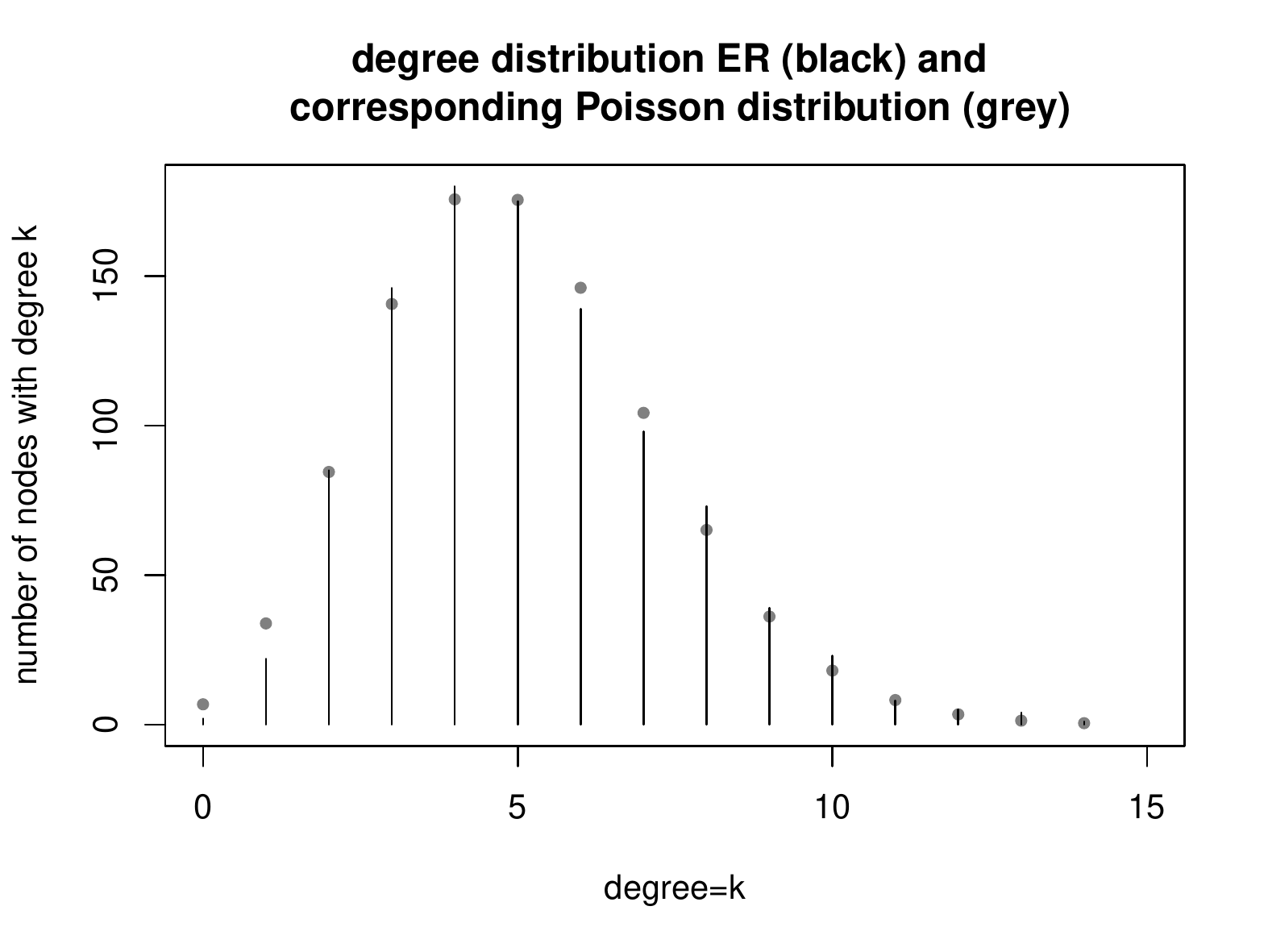}
\caption{On the left is the degree distribution of an \ER \ graph with $n=1000$ nodes and $p=\frac{1}{100}$. The dots represent the corresponding Poisson distribution with $\lambda=(n-1)p=9.99$. All \ER \ subgraphs are based on this \ER \ graph.\newline
On the right is the degree distribution of a subgraph constructed by selection of edges with probability $q=0.5$ again with the corresponding Poisson distribution with $\lambda=(n-1)pq=4.995$.}\label{ERdeg}
\vspace{1cm}
\includegraphics[width=6cm]{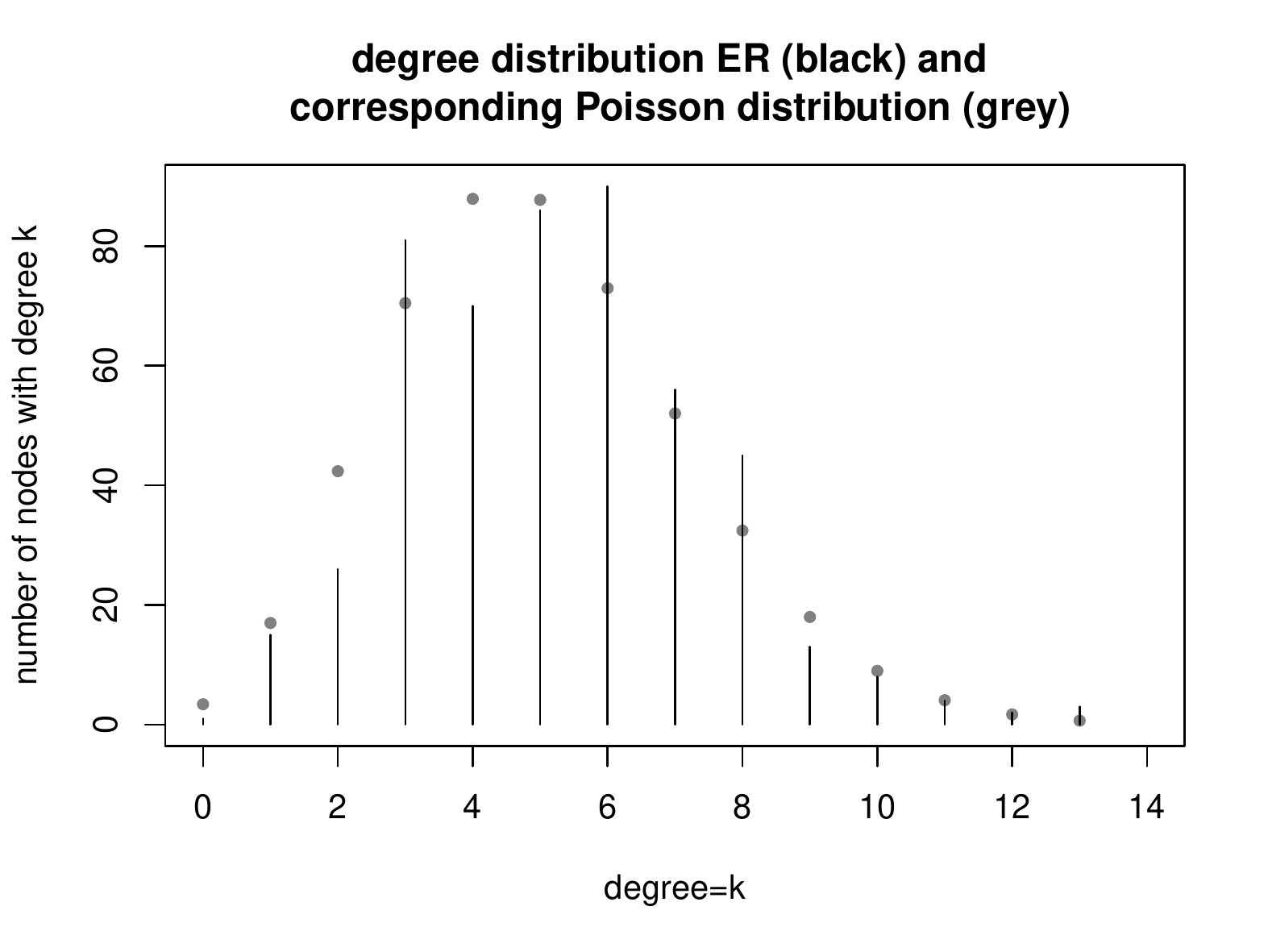}
\includegraphics[width=6cm]{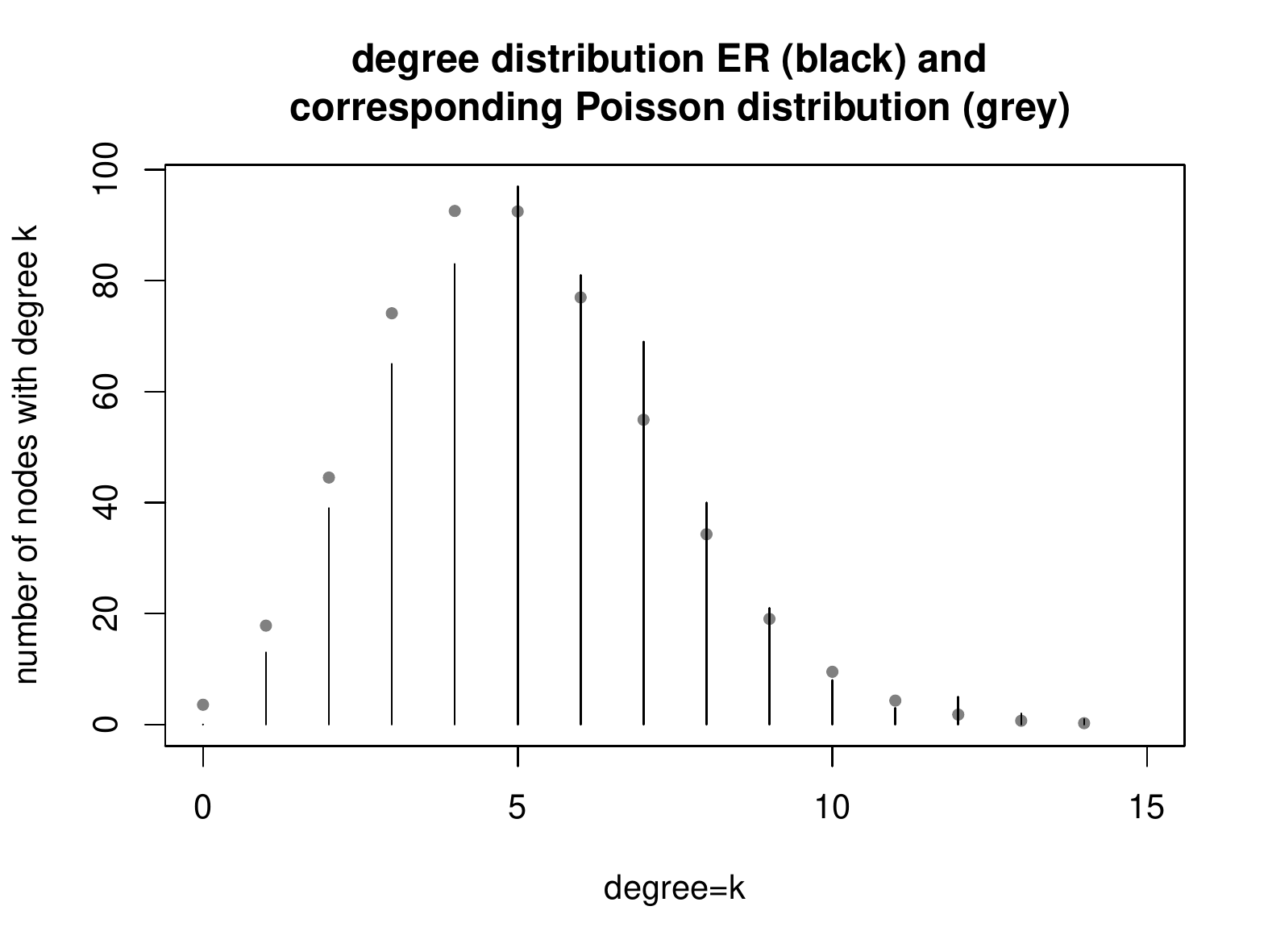}
\caption{On the left we see the degree distribution of a subgraph constructed by uniform selection of nodes with $m=500$ together with the corresponding Poisson distribution with $\lambda=(m-1)p=4.99$.\newline
On the right is the degree distribution of a subgraph constructed by binomial selection of nodes with probability $q=\frac{m}{n}=0.5$ and the corresponding Poisson distribution with $\lambda=(n-1)pq=4.995$.} \label{degER}
\end{figure}

\subsubsection{Subgraphs of \ER \ graphs}
We now want to analyze subgraphs of \ER\ graph and especially their degree distribution. We consider three different mechanisms for the constructions of our subgraphs. Let first $q\in [0,1]$ be the probability for an edge from the graph $G$ to be in the subgraph. We then decide for every edge of $G$ independently if it should stay in the subgraph, i.e.
\[\PX(D_{Sub}(i)=l\vert D(i)=k)={k \choose l} q^l(1-q)^{k-l},\ k\in\{0,\ldots,n-1\},\ l\in\{0,\ldots,k\}.\]
where $D_{Sub}(i)$ denotes the degree of node $i\in [n]$ in the subgraph. Then by the law of total probability it follows
\begin{eqnarray*}
&&\PX(D_{Sub}(i)=l)\\
&=&\sum_{k=0}^{n-1}\PX(D_{Sub}(i)=l\vert D(i)=k)\PX(D(i)=k)\\
&=&\sum_{k=l}^{n-1}{k \choose l}q^l(1-q)^{k-l}{n-1 \choose k}p^k(1-p)^{n-1-k}\\
&=&{n-1 \choose l} (pq)^l \sum_{k=0}^{n-1-l} { n-1-l \choose k}(p-pq)^{k}(1-p)^{n-1-l-k}\\
&=&{n-1 \choose l} (pq)^l (1-pq)^{n-1-l}.\\
\end{eqnarray*}
Hence the degree distribution of the subgraph is again binomial with parameters $n-1$ and $pq$. The degree distributions of an \ER\ graph and the resulting subgraph are depicted in figure \ref{degER}.

Next we construct subgraphs by deleting nodes from the graph . In this case we keep an edge if both adjacent nodes are also in the subgraph. One possibility to do this is to fix the number of nodes in the subgraph and then choosing the subgraph from the set of all subgraph with this number of nodes.\\
Hence let $G=(V,E)$ be an \ER \ graph with $V=[n]$. Let further $m\in[n]$ be the number of nodes in the subgraph and
\[\Omega_m\coloneqq \left\{\omega=(\omega_1,\ldots,\omega_n)\in \{0,1\}^n: \sum_{i=1}^n \omega_i =m\right\}, \]
the set of all possibilities of choosing $m$ nodes out of $n$ nodes. Since every subgraph with $m$ nodes has the same probability to be chosen it holds for all $\omega\in\Omega_m$
\[\PX(\{\omega\})= \frac{1}{{n\choose m}}.\]
We denote such a subgraph of the \ER \ graph by $G^m_{Sub}=(V^m_{Sub},E^m_{Sub})$. Let $i \in V^m_{Sub}$ be a fixed node then the subset of $\Omega_m$ giving all combinations of the other $m-1$ nodes ist given by
\begin{align*}\Omega_m^i&\coloneqq \{\omega\in\Omega_m: \omega_i=1\}\\
&= \left\{\omega=(\omega_1,\ldots,\omega_n)\in \{0,1\}^{i-1}\times\{1\}\times\{0,1\}^{n-i}: \sum_{j\in[n]\setminus{i}} \omega_j =m-1\right\}.
\end{align*}
Since the number of elements in $\Omega^i_m$ is given by ${n-1 \choose m-1}$ the conditional degree distribution of $i$ is given by
\[\PX(D_{Sub}(i)=l\vert D(i)=k)=\frac{{n-1-k \choose m-1-l}{k \choose l}}{{n-1\choose m-1}}.\]
By using the law of total probability again it holds for fixed $i \in V^m_{Sub}$
\begin{eqnarray*}
&&\PX(D_{Sub}(i)=l)\\
&=& \sum_{k=0}^{n-1}\PX(D_{Sub}(i)=l\vert D(i)=k)\PX(D(i)=k)\\
&=&\sum_{k=0}^{n-1}\frac{{n-1-k \choose m-1-l}{k \choose l}}{{n-1 \choose m-1}}{n-1 \choose k}p^k (1-p)^{n-1-k}\\
&=&\sum_{k=0}^{n-1}{n-m \choose k-l}{m-1\choose l} p^{k-l+l}(1-p)^{n-1-(m-1)-(k-l)+m-1-l}\\
&=&{m-1\choose l}p^l(1-p)^{m-1-l}\sum_{k=0}^{n-m}{n-m \choose k} p^{k}(1-p)^{n-m-k}\\
&=&{m-1\choose l}p^l(1-p)^{m-1-l}.\\
\end{eqnarray*}
Hence we again have binomially distributed degrees with parameters $m-1$ and $p$.\\
We now want to choose nodes binomially distributed to stay in the subgraph. Let therefore be $q\in[0,1]$. We denote the resulting subgraph by $G^q_{Sub}=(V^q_{Sub},E^q_{Sub})$. It then holds for $m\in \{0,\ldots,n\}$:
\[\PX(\# V_{Sub}^q=m)={n\choose m}q^m(1-q)^{n-m}.\]
Let again be $i \in V^q_{Sub}$ be a fixed node then it holds
\[\PX(\# V_{Sub}^q=m)={n-1\choose m-1}q^{m-1}(1-q)^{n-m}, \ m\in\{1,\ldots,n\}\]
and the probability for node $i$ in the subgraph to have degree $l$, given $D(i)=k$ in the graph and $\#V^q_{Sub}=m$ is
\[\PX(D_{Sub}(i)=l\vert D(i)=k,\# V_{Sub}^q=m)=\left\{\begin{array}{ll} \frac{{n-1-k \choose m-1-l}{k \choose l}}{{n-1\choose m-1}} &\text{for } i\in V_{Sub}^q,\\ 0&\text{for } i\notin V_{Sub}^q. \end{array}\right.\]
By the law of total probability we get
\begin{eqnarray*}
&&\PX(D_{Sub}(i)=l\vert \#V_{Sub}^q=m)\\
&=& \sum_{k=0}^{n-1}\PX(D_{Sub}(i)=l\vert D(i)=k,\#V_{Sub}^q=m)\PX(D(i)=k)\\
&=&\sum_{k=0}^{n-1}\frac{{n-1-k \choose m-1-l}{k \choose l}}{{n-1 \choose m-1}}{n-1 \choose k}p^k (1-p)^{n-1-k}={m-1\choose l}p^l(1-p)^{m-1-l},\\
\end{eqnarray*}
which yields
\begin{eqnarray*}
&&\PX(D_{Sub}(i)=l)\\
&=&\sum_{m=l+1}^n\PX(D_{Sub}(i)=l  \vert \#V_{Sub}^q=m)\PX(\#V_{Sub}^q=m)\\
&=&\sum_{m=l+1}^{n} {m-1 \choose l} p^l(1-p)^{m-1-l}{n-1\choose m-1}q^{m-1}(1-q)^{n-m}\\
&=&{n-1 \choose l } (pq)^l\sum_{m=0}^{n-l-1}{n-1-l\choose m}(q-pq)^{m}(1-q)^{n-1-l-m}\\
&=&{n-1 \choose l } (pq)^l(1-pq)^{n-l-1}.\\
\end{eqnarray*}
Hence the degrees in $G^q_{Sub}$ are again binomially distributed with parameters $n-1$ and $pq$. If we now choose $q\coloneqq \frac{m}{n}$ then $G^m_{Sub}$ and $G^q_{Sub}$ are comparable by their number of nodes, because
\[ \E(\#V^q_{Sub})=nq=n\frac{m}{n}=m=\#V^m_{Sub}=\E(\#V^m_{Sub}).\]

\begin{figure}[h]
\centering
\includegraphics[width=8cm]{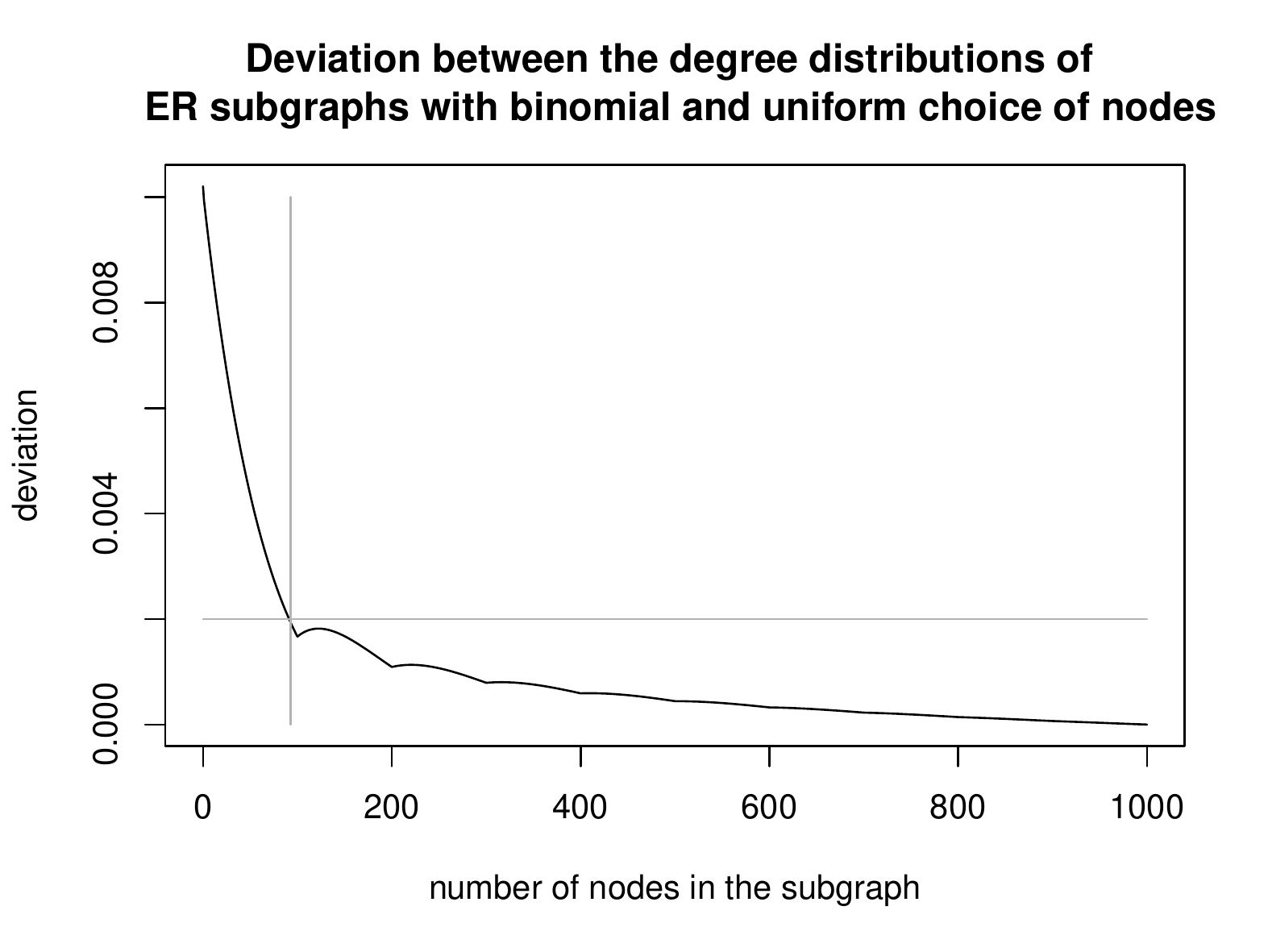}
\caption{Absolute deviation of the degree distributions by constructing the \ER \ subgraphs via the selection of nodes (black line). For subgraphs wit more than $93$ nodes the absolute deviation of the two binomial distributions is smaller than $0.002.$}\label{Fehlerplot}
\end{figure}

Moreover the degree distributions are comparable as figure \ref{Fehlerplot} shows. For an \ER \ graph with $n=1000$ nodes and edge probability $p=1/100$ we get, that the degree distributions of the subgraphs only differ significantly if the subgraphs have less than $10\%$ of the nodes of the graph.\\
By the calculations of this section we see that subgraphs of an \ER \ graph constructed according to one of the above mechanisms have the same structure as the graph itself and can therefore again be modeled as \ER\ graphs.

\subsection{The \PA \ model}
There are different possibilities to define a \PA \ model but the basic idea stays the same. All \PA \ models consider a growing graph where the degrees of the existing nodes influence the probability of a new node connecting to them. In our model we will not allow self-loops, hence every new node really is connected to the existing graph.\\
For our model we fix $m\in \N$ and $\delta\in (-m,\infty)$. We then initialize our graph with one node with $m$ self-loops. Here the self-loops are necessary to calculate the probabilities. Every new node has $m$ edges which connect to existing nodes. At time $n\geq 2$ the $n$-th node is added to the graph.\\
\begin{figure}[hb!]
\centering
\includegraphics[width=5cm]{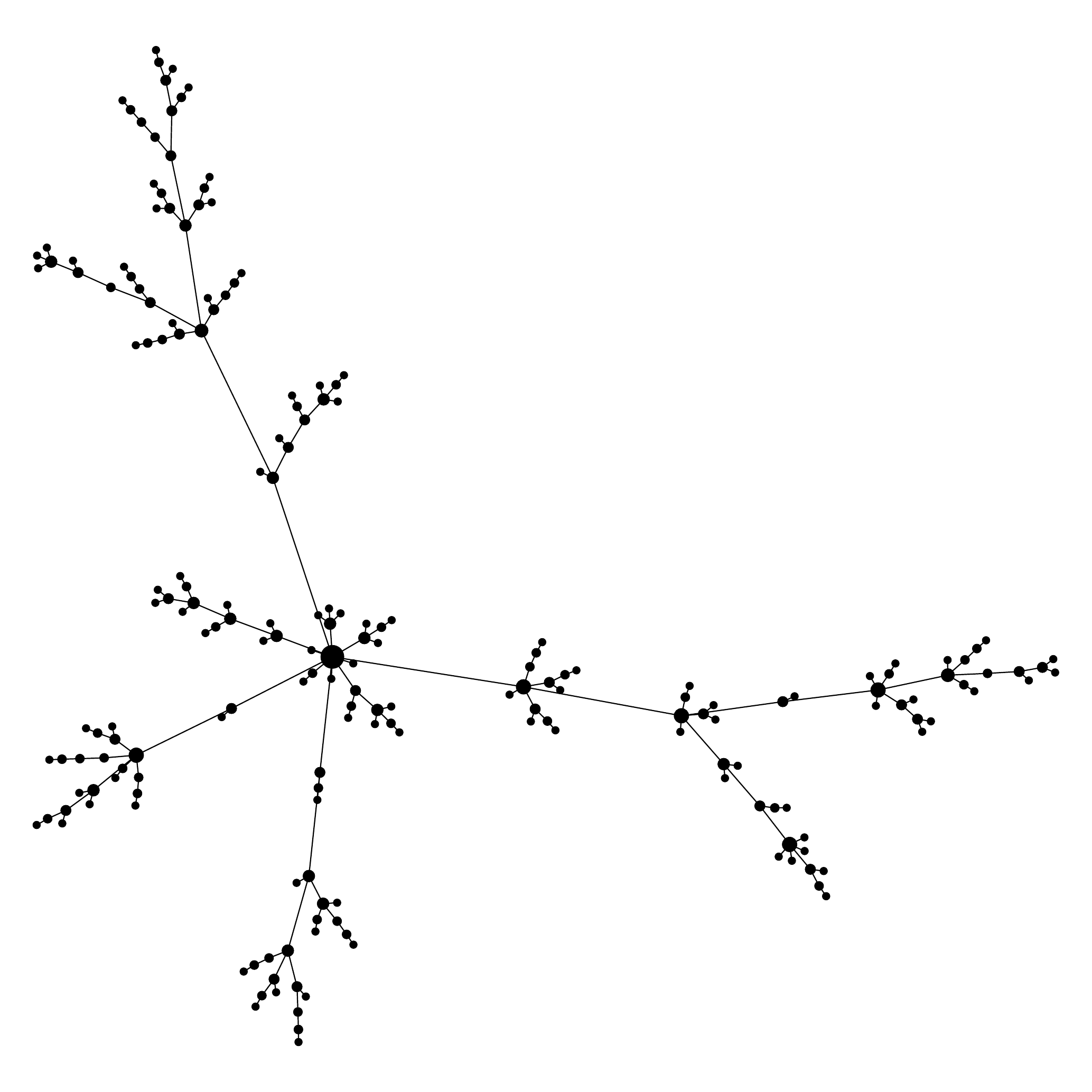}
\includegraphics[width=5cm]{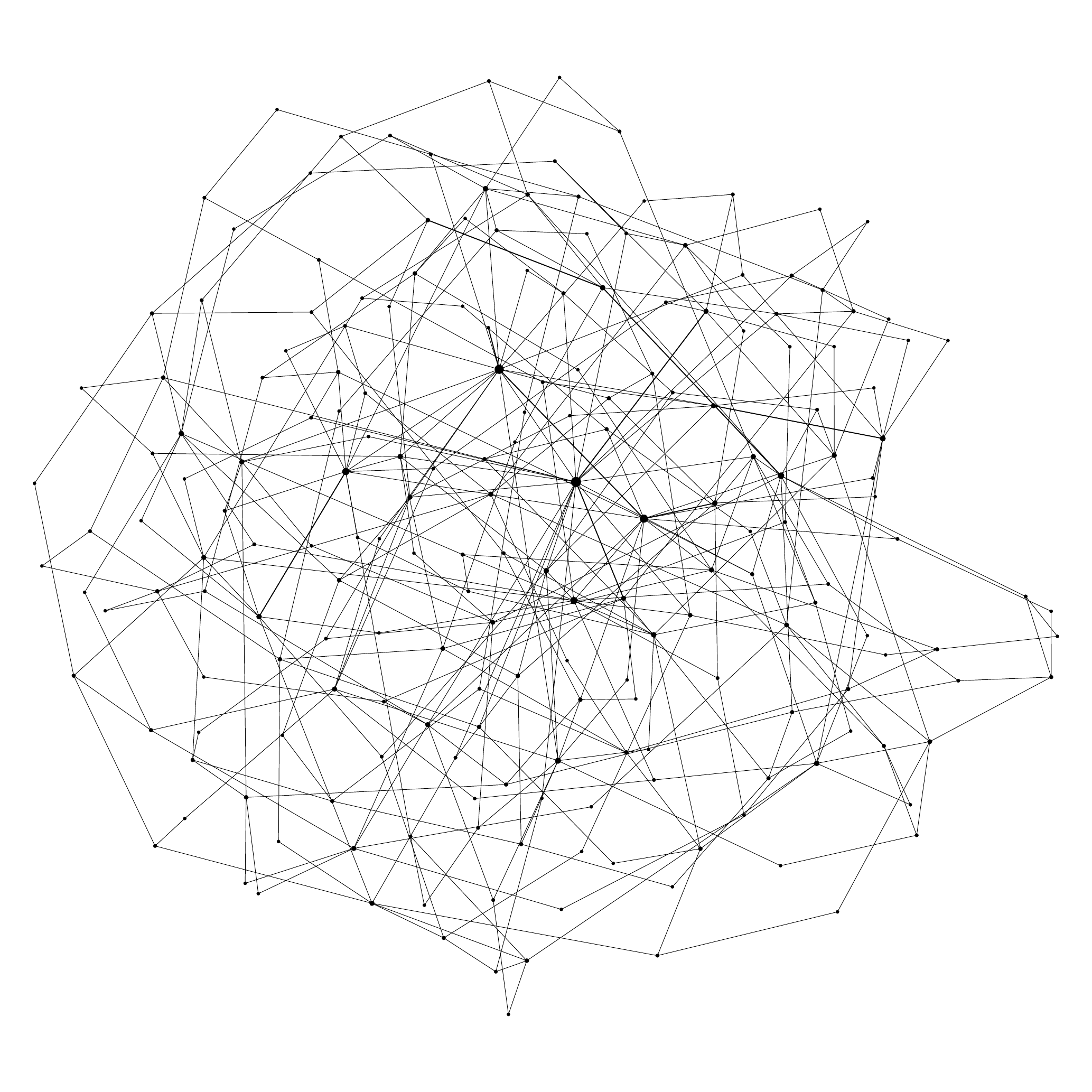}
\includegraphics[width=5cm]{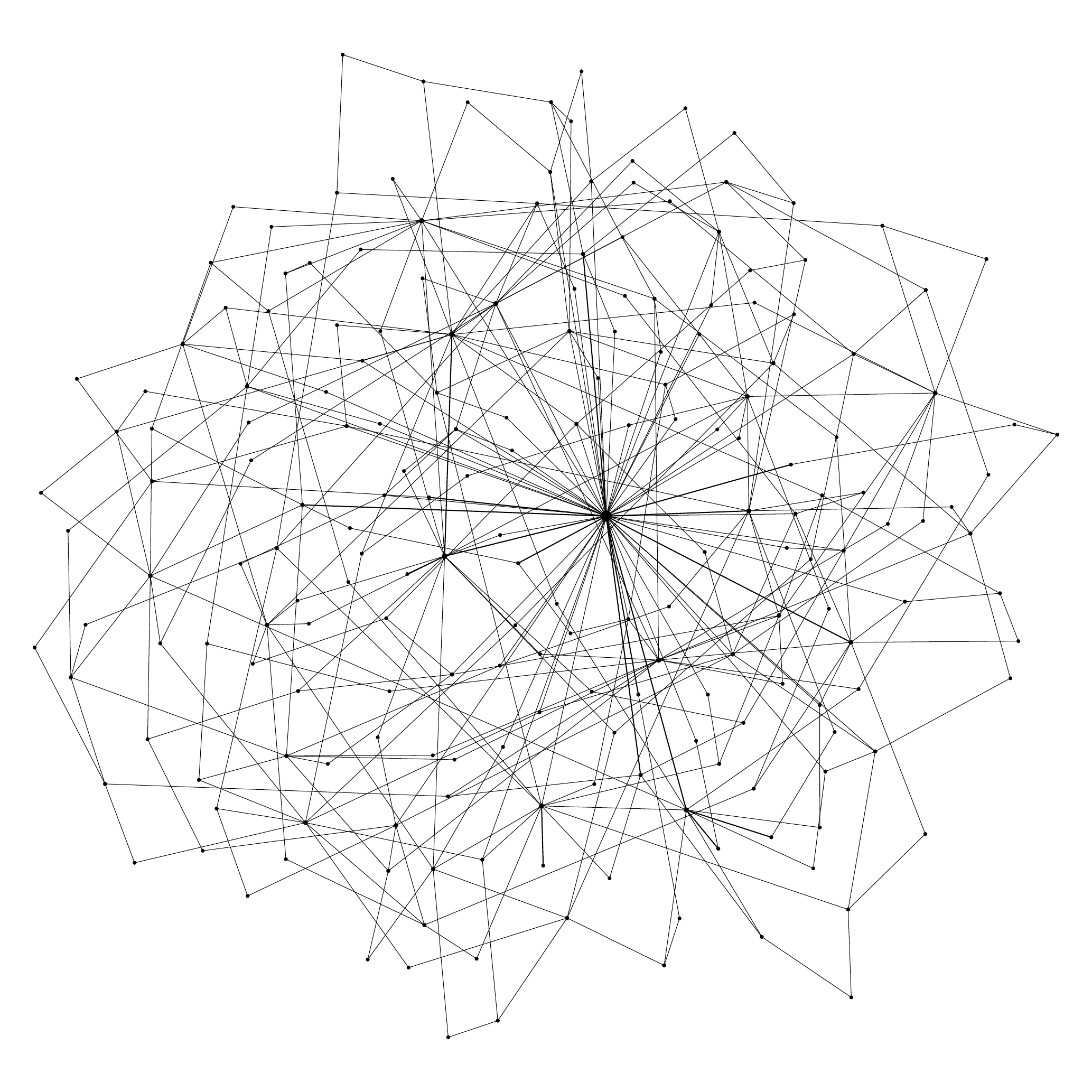}
\includegraphics[width=5cm]{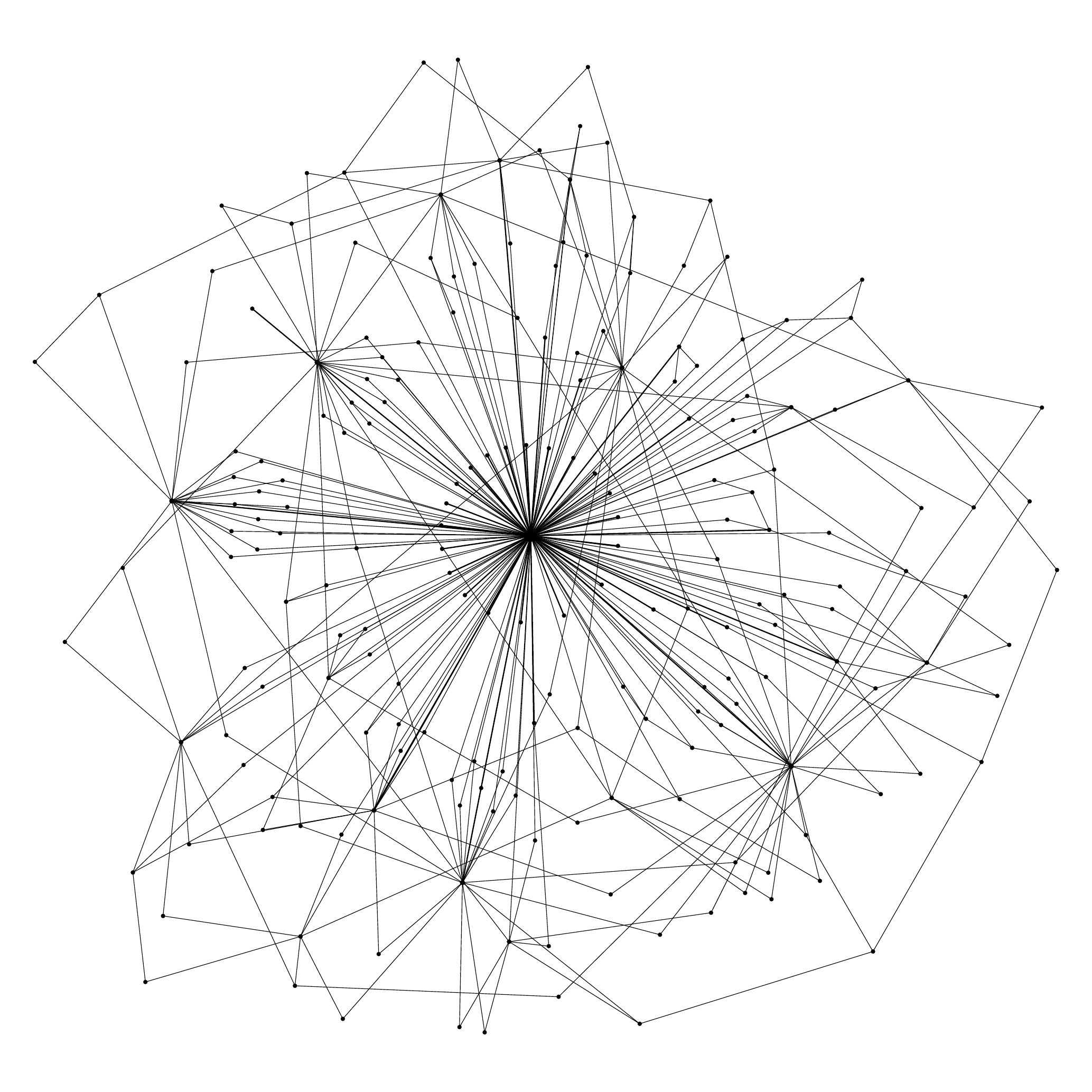}
\caption{Four \PA \ graphs with $n=200$ nodes and different values for $m$ and $\delta$. Upper left: $m=1,\ \delta=10$. Upper right: $m=2,\ \delta=10$. Lower left: $m=2,\ \delta=0$. Lower right: $m=2,\ \delta=-1.5$.}\label{PANW}
\end{figure}

Let $D^n(i)$ be the degree of node $i\leq n$ at time $n$ and $PA_n^{(m,\delta)}=(V_n,E_n)$ the \PA \ graph with parameters $m\geq 1$ and $\delta>-m$ at time $n$. Then the probability of node $n+1$ to connect to node $i$, hence the probability for $\{n+1,i\}\in E_n$ given $PA_n^{(m,\delta)}$, is given by

\[\PX(\{n+1,i\}\in E_{n+1}\vert PA^{(m,\delta)}_n)\coloneqq \frac{D^{n}(i)+\delta}{(2m+\delta)n}\ \text{for} \ i\in [n],\ n\in\N.\]
Since we only consider finite graphs, we stop the construction when the graph has the desired size. By construction the graph is connected, due to the fact that we do not allow self-loops. In figure \ref{PANW} we see four \PA \ graphs with different parameters. This leads to different structural properties. Since $m$ is responsible for the number of edges added with each node one will always get a tree for $m=1$. The parameter $\delta$ controls the influence of the degrees on the connection probabilities. For $\delta$ close to $-m$ the probability for a new node to connect to a node with degree $m$ is rather small which leads to a graph where early nodes are preferred and get a much higher degree than nodes added later on. For $\delta$ large the influence of the degrees on the connection probabilities is small which leads to a more homogeneous graph. 

\subsubsection{Properties of \PA \ graphs}
We now want to look at the degree distribution and because we can not determine it exactly we are also interested in the expected degree and variance of the degree of fixed nodes. First we get that the mean degree in a \PA \ graph of size $n$ is given by
\begin{equation}\label{mean}
\frac{1}{n}\sum_{i=1}^n D^n(i)=\frac{2nm}{n}=2m,
\end{equation}
since every node adds $m$ edges to the graph and the sum over all degrees is twice the number of edges. We are not able determine the degree distribution exactly, but it is possible to show that the degree distributions converge and to give the exact limit.\\
Let $P_k(n):=\frac{1}{n}\sum_{i=1}^n \1_{\{D^n(i)=k\}},\ k\in \N_0$ be the ratio of nodes with degree $k$ at time $n$. Then $(P_k(n))_{k\geq 0}$ defines the degree distribution of $PA_n^{(m,\delta)}$. For $m\geq 1, \ \delta>-m$ and all $k\in \N$ we define the sequence $(p_k)_{k\in \N}$ by
\begin{equation}
\label{Grenz}p_k\coloneqq \left\{\begin{array}{ll}0&\text{for }k\leq m-1, \\ \left(2+\frac{\delta}{m}\right) \frac{\Gamma(k+\delta)\Gamma(m+2+\delta+\frac{\delta}{m})}{\Gamma(m+\delta)\Gamma(k+3+\delta+\frac{\delta}{m})}& \text{for }k\geq m, \end{array}\right.
\end{equation}
where
$\Gamma(t)=\int_0^\infty x^{t-1}e^{-x}dx, \ t>0$ is the Gamma function. For the degree distribution it then holds
\[P_k(n)\overset{\PX}{\to}p_k, (n\to \infty).\]
This convergence is shown in \cite{Hofstad}. The limit of the degree distributions is again a probability distribution by
\begin{thm}
The limit distribution $(p_k)_{k\geq m}$ is a probability distribution.
\end{thm}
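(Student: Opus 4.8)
The plan is to verify the two defining properties of a probability distribution: nonnegativity of every $p_k$, and that the $p_k$ sum to one. Nonnegativity is immediate. For $k\leq m-1$ we have $p_k=0$, while for $k\geq m$ the hypothesis $\delta>-m$ forces $\delta/m>-1$, so that each argument appearing in the four Gamma factors --- namely $k+\delta\geq m+\delta>0$, $m+2+\delta+\tfrac{\delta}{m}>0$, $m+\delta>0$ and $k+3+\delta+\tfrac{\delta}{m}>0$ --- is strictly positive. Since $\Gamma$ is positive on $(0,\infty)$ and the prefactor satisfies $2+\tfrac{\delta}{m}>1>0$, every $p_k$ with $k\geq m$ is in fact strictly positive.

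The substance of the proof is the normalization $\sum_{k=m}^\infty p_k=1$, which I would obtain by a telescoping argument built on the functional equation $\Gamma(t+1)=t\,\Gamma(t)$. Abbreviating $a:=\delta$ and $b:=3+\delta+\tfrac{\delta}{m}$, the $k$-dependent part of $p_k$ is $\Gamma(k+a)/\Gamma(k+b)$, and the constant prefactor $2+\tfrac{\delta}{m}$ equals $b-1-a$. I would introduce the auxiliary function
\[
g(k):=\frac{\Gamma(k+a)}{\Gamma(k+b-1)}
\]
and compute, using $\Gamma(k+b)=(k+b-1)\Gamma(k+b-1)$ and $\Gamma(k+1+a)=(k+a)\Gamma(k+a)$, the difference
\[
g(k)-g(k+1)=\frac{\Gamma(k+a)}{\Gamma(k+b)}\bigl[(k+b-1)-(k+a)\bigr]=(b-1-a)\,\frac{\Gamma(k+a)}{\Gamma(k+b)}.
\]
Since $b-1-a=2+\tfrac{\delta}{m}$, this is exactly the $k$-varying part of $p_k$ times its prefactor, so that $p_k=\frac{\Gamma(m+b-1)}{\Gamma(m+a)}\bigl(g(k)-g(k+1)\bigr)$ and the partial sums telescope:
\[
\sum_{k=m}^{N}p_k=\frac{\Gamma(m+b-1)}{\Gamma(m+a)}\bigl(g(m)-g(N+1)\bigr).
\]

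Finally I would evaluate the two boundary terms. The lower term gives $\frac{\Gamma(m+b-1)}{\Gamma(m+a)}\,g(m)=\frac{\Gamma(m+b-1)}{\Gamma(m+a)}\cdot\frac{\Gamma(m+a)}{\Gamma(m+b-1)}=1$, which is where the specific normalizing constant in the definition of $p_k$ pays off. It remains to check that the upper term vanishes, i.e. $g(N+1)=\Gamma(N+1+a)/\Gamma(N+b)\to 0$ as $N\to\infty$. This is the only step requiring genuine care: by the standard asymptotic $\Gamma(x+\alpha)/\Gamma(x+\beta)\sim x^{\alpha-\beta}$, a consequence of Stirling's formula, one has $g(N+1)\sim N^{(1+a)-b}=N^{-(1+\delta/m)}$, and the exponent $1+\delta/m$ is strictly positive precisely because $\delta>-m$. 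Thus $g(N+1)\to 0$, the partial sums converge to $1$, and together with nonnegativity this shows that $(p_k)_{k\geq m}$ is a probability distribution. The main obstacle is simply spotting the telescoping antiderivative $g$ and invoking the Gamma-ratio asymptotic for the tail; everything else is bookkeeping with the functional equation.
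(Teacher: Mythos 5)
Your proof is correct, and it differs from the paper's ``proof'' in the most basic way possible: the paper gives no argument at all, deferring entirely to van der Hofstad \cite{Hofstad}. Your argument --- positivity of each $p_k$ from $\delta>-m$, plus the telescoping identity $p_k=\frac{\Gamma(m+b-1)}{\Gamma(m+a)}\bigl(g(k)-g(k+1)\bigr)$ with $g(k)=\Gamma(k+a)/\Gamma(k+b-1)$, $a=\delta$, $b=3+\delta+\frac{\delta}{m}$ --- is a complete, self-contained and elementary verification, essentially reconstructing the computation that the cited reference performs; what it buys is that the theorem no longer rests on an external citation. One arithmetic slip in your tail estimate: $(1+a)-b=1+\delta-\bigl(3+\delta+\tfrac{\delta}{m}\bigr)=-\bigl(2+\tfrac{\delta}{m}\bigr)$, not $-\bigl(1+\tfrac{\delta}{m}\bigr)$, so in fact $g(N+1)\sim N^{-(2+\delta/m)}$. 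This only helps you: $\delta>-m$ gives $2+\tfrac{\delta}{m}>1>0$ (indeed even $\delta>-2m$ would suffice for the tail to vanish), so $g(N+1)\to 0$, the partial sums converge to $1$, and your conclusion stands unchanged.
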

\begin{proof} See \cite{Hofstad}.
\end{proof}
Stirling's formula states
\[\frac{\Gamma(x+a)}{\Gamma(x)}\approx x^a.\]
By that we get for $k$ sufficiently big
\begin{eqnarray*}
p_k&=&\left(2+\frac{\delta}{m}\right)\frac{\Gamma(k+\delta)\Gamma(m+2+\delta+\frac{\delta}{m})}{\Gamma(m+\delta)\Gamma(k+3+\delta+\frac{\delta}{m})}\\
&=& \left(2+\frac{\delta}{m}\right)\frac{\Gamma(m+2+\delta+\frac{\delta}{m})}{\Gamma(m+\delta)} \frac{\Gamma(k+\delta)}{\Gamma(k+\delta+3+\frac{\delta}{m})}\\
&\approx& \left(2+\frac{\delta}{m}\right)\frac{\Gamma(m+2+\delta+\frac{\delta}{m})}{\Gamma(m+\delta)} k^{-(3+\frac{\delta}{m})}\\
&=&c_{m,\delta}k^{-\tau},
\end{eqnarray*}
where $c_{m,\delta}=(2+\frac{\delta}{m})\frac{\Gamma(m+2+\delta+\frac{\delta}{m})}{\Gamma(m+\delta)}$ and $\tau=3+\frac{\delta}{m}$. Hence we get that the \PA \ graph is scale free if the number of nodes is large.\\
Since we can not estimate the exact degree distribution we now look at the expectation and variance of the degrees of fixed nodes.
\begin{thm} \label{ewdegstz}
Let $m\geq1,\ \delta>-m$, then for the expected degree of node $i\in[n]$ it holds
\[\E(D^{n}(i)+\delta)=(m+\1_{\{i=1\}}m+\delta)\frac{\Gamma(n+\frac{m}{2m+\delta})\Gamma(i)}{\Gamma(i+\frac{m}{2m+\delta})\Gamma(n)}.\] 
\end{thm}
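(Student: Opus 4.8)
The plan is to establish a one-step recursion for $\E(D^n(i)+\delta)$ and then unfold it into a finite product which is recognizable as a ratio of Gamma functions. Writing $\alpha:=\frac{m}{2m+\delta}$ throughout keeps the algebra compact.

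First I would condition on the graph $PA_n^{(m,\delta)}$ and pass from time $n$ to time $n+1$. Node $n+1$ contributes exactly $m$ edges, and each attaches to node $i$ with probability $\frac{D^n(i)+\delta}{(2m+\delta)n}$; hence by linearity of expectation the expected increment of the degree of $i$ equals $m\cdot\frac{D^n(i)+\delta}{(2m+\delta)n}$. This gives
\[\E(D^{n+1}(i)+\delta\vert PA_n^{(m,\delta)})=(D^n(i)+\delta)\left(1+\frac{m}{(2m+\delta)n}\right)=(D^n(i)+\delta)\,\frac{n+\alpha}{n}.\]
Taking total expectations via the tower property removes the conditioning and produces the scalar recursion $\E(D^{n+1}(i)+\delta)=\frac{n+\alpha}{n}\,\E(D^n(i)+\delta)$.

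Next I would unfold this recursion from the time node $i$ is created. Iterating from $n=i$ yields
\[\E(D^n(i)+\delta)=\E(D^i(i)+\delta)\prod_{k=i}^{n-1}\frac{k+\alpha}{k},\]
and then convert the finite product using $\prod_{k=i}^{n-1}(k+\alpha)=\frac{\Gamma(n+\alpha)}{\Gamma(i+\alpha)}$ together with $\prod_{k=i}^{n-1}k=\frac{\Gamma(n)}{\Gamma(i)}$, so that the product collapses to $\frac{\Gamma(n+\alpha)\Gamma(i)}{\Gamma(i+\alpha)\Gamma(n)}$. Substituting $\alpha=\frac{m}{2m+\delta}$ then reproduces the Gamma-factor in the claimed identity.

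The only genuinely delicate step is pinning down the initial value $\E(D^i(i)+\delta)$, and this is where the indicator enters. For $i\geq2$ node $i$ is born carrying its $m$ outgoing edges, so $D^i(i)=m$ deterministically and $\E(D^i(i)+\delta)=m+\delta$; for $i=1$ the starting node carries $m$ self-loops, each contributing two to the degree, so $D^1(1)=2m$ and $\E(D^1(1)+\delta)=2m+\delta$. These two base cases are exactly the prefactor $m+\1_{\{i=1\}}m+\delta$, which finishes the formula. As a consistency check I would also verify $\sum_{i=1}^n\frac{D^n(i)+\delta}{(2m+\delta)n}=1$, which follows from $\sum_i D^n(i)=2mn$ and legitimizes the factor $m$ appearing in the very first step.
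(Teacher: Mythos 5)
Your proposal is correct and follows essentially the same route as the paper's proof: a one-step conditional expectation giving the multiplicative recursion $\E(D^{n}(i)+\delta)=\frac{n-1+\frac{m}{2m+\delta}}{n-1}\,\E(D^{n-1}(i)+\delta)$, unfolded from the birth time $i$ and collapsed into a ratio of Gamma functions, with the indicator term coming from the base cases $D^i(i)=m$ for $i\geq 2$ and $D^1(1)=2m$ (self-loops counted twice). The only cosmetic difference is that you condition on the whole graph $PA_n^{(m,\delta)}$ while the paper conditions on $D^{n-1}(i)$, which is equivalent here since the attachment probability depends only on that degree.
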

\begin{proof}
Let $m\geq 1$ and $\delta>-m$ be fixed then
\begin{eqnarray*}
\E(D^{n}(i)+\delta\vert D^{n-1}(i))&=&D^{n-1}(i)+\delta+\E(D^{n}(i)-D^{n-1}(i)\vert D^{n-1}(i))\\
&=&D^{n-1}(i)+\delta+m\PX(\{n,i\}\in E_n \vert PA^{(m,\delta)}_{n-1})\\
&=&(D^{n-1}(i)+\delta)\frac{(2m+\delta)(n-1+\frac{m}{2m+\delta})}{(2m+\delta)(n-1)}\\
&=&(D^{n-1}(i)+\delta)\frac{n-1+\frac{m}{2m+\delta}}{n-1},\\
\end{eqnarray*}
and obviously
\[\E(D^{i}(i)+\delta)=m+\1_{\{i=1\}}m+\delta \ \text{ for all } i\geq 1.\]
For $\E(D^{n}(i)+\delta)$ it then follows recursively
\begin{eqnarray*}
\E(D^{n}(i)+\delta)&=&\E(\E(D^{n}(i)+\delta\vert D^{n-1}(i)))\\
&=&\E(D^{n-1}(i)+\delta)\frac{n-1+\frac{m}{2m+\delta}}{n-1}\\
&& \vdots\\
&=&\E(D^{i}(i)+\delta)\frac{n-1+\frac{m}{2m+\delta}}{n-1}\cdot \ldots \cdot \frac{i+\frac{m}{2m+\delta}}{i}\\
&=&(m+\1_{\{i=1\}}m+\delta)\frac{\Gamma(n+\frac{m}{2m+\delta})\Gamma(i)}{\Gamma(i+\frac{m}{2m+\delta})\Gamma(n)}.
\end{eqnarray*}
\end{proof}

\begin{thm}
Let $m\geq 1,\ \delta >-m$, then for the variance of the degree of a fixed node $i$ at time $n$ it holds
\begin{eqnarray*}
\Var(D^{n}(i))&=&(m+\1_{\{i=1\}}m+\delta)^2\left[\prod_{j=i}^{n-1}(d_j-c_j)-\prod_{j=i}^{n-1}d_j\right]\\ 
&&+\sum_{j=i}^{n-1}\E(D^{j}(i)+\delta)\sqrt{mc_{j}}\prod_{k=j+1}^{n-1}(d_k-c_k),
\end{eqnarray*}
where
\[c_j= \frac{m}{(2m+\delta)^2j^2}\ \text{and} \ d_j=\left(1+\frac{m}{(2m+\delta)j}\right)^2, \forall 1\leq j<n.\]
\end{thm}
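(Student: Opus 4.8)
The plan is to set $Z_n := D^n(i)+\delta$ and exploit $\Var(D^n(i))=\Var(Z_n)$, so that the recursive structure already used in the proof of Theorem \ref{ewdegstz} can be reused. The decisive observation is that the $m$ edges emanating from the newly inserted node $n$ attach independently, each landing on node $i$ with probability $p_{n-1}:=\frac{Z_{n-1}}{(2m+\delta)(n-1)}$; hence the increment $D^n(i)-D^{n-1}(i)$ is, conditionally on the past, $\mathrm{Bin}(m,p_{n-1})$. From this I would read off both conditional moments. The conditional mean is $\E(Z_n\mid Z_{n-1})=a_{n-1}Z_{n-1}$ with $a_{n-1}=1+\frac{m}{(2m+\delta)(n-1)}$, which matches the computation in Theorem \ref{ewdegstz} and satisfies $a_{n-1}^2=d_{n-1}$. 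The conditional variance is $\Var(Z_n\mid Z_{n-1})=m\,p_{n-1}(1-p_{n-1})$, which after inserting the definition of $c_j$ and using $\sqrt{mc_{n-1}}=\frac{m}{(2m+\delta)(n-1)}$ becomes exactly $\sqrt{mc_{n-1}}\,Z_{n-1}-c_{n-1}Z_{n-1}^2$.

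The key trick is then to track the \emph{second moment} rather than the variance directly. Writing $s_n:=\E(Z_n^2)$ and combining $\E(Z_n^2\mid Z_{n-1})=\Var(Z_n\mid Z_{n-1})+(\E(Z_n\mid Z_{n-1}))^2$ with the two conditional moments above, the quadratic terms combine cleanly into $(d_{n-1}-c_{n-1})Z_{n-1}^2$, and after taking expectations one obtains the linear recursion
\[ s_n=(d_{n-1}-c_{n-1})\,s_{n-1}+\sqrt{mc_{n-1}}\,\E(Z_{n-1}). \]
This is the heart of the argument: the variance alone does not close into such a recursion, because of a leftover $(\E Z_{n-1})^2$ term, whereas the second moment does.

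I would solve this inhomogeneous linear recursion by the standard product-and-sum formula, using the initial value $s_i=A^2$ with $A:=m+\1_{\{i=1\}}m+\delta$. This initial value is exact because at time $i$ the degree is deterministic: node $i\geq 2$ enters with exactly $m$ edges, while node $1$ starts with $m$ self-loops contributing degree $2m$, so $Z_i\equiv A$ and $\Var(Z_i)=0$. Unrolling gives
\[ s_n=A^2\prod_{j=i}^{n-1}(d_j-c_j)+\sum_{j=i}^{n-1}\sqrt{mc_j}\,\E(D^j(i)+\delta)\prod_{k=j+1}^{n-1}(d_k-c_k). \]
Finally I would subtract $(\E Z_n)^2$. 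Since $\E Z_n=A\prod_{j=i}^{n-1}a_j$ by Theorem \ref{ewdegstz} and $a_j^2=d_j$, we have $(\E Z_n)^2=A^2\prod_{j=i}^{n-1}d_j$, and $\Var(Z_n)=s_n-(\E Z_n)^2$ collapses to the stated formula.

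The main obstacle I anticipate is conceptual rather than computational: recognizing that one should recurse on $\E(Z_n^2)$ and not on the variance, since it is precisely the cancellation of the $c_j Z_{n-1}^2$ piece from the conditional variance against the $d_j Z_{n-1}^2$ piece from the squared conditional mean that produces the factor $d_j-c_j$. A secondary point requiring care is the correct modeling of the increment as $\mathrm{Bin}(m,p_{n-1})$ -- in particular that it is the $(1-p_{n-1})$ factor in the binomial variance that generates the $-c_j Z_{n-1}^2$ term -- together with the bookkeeping of the telescoping products and the deterministic initial condition at time $i$.
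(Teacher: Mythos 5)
Your proof is correct, and it takes a genuinely leaner route than the paper's, while resting on the same two ingredients: the conditionally binomial increment $D^n(i)-D^{n-1}(i)\mid D^{n-1}(i)\sim\mathrm{Bin}(m,p_{n-1})$ and the identities $a_j^2=d_j$, $\sqrt{mc_j}=\frac{m}{(2m+\delta)j}$. The paper starts from the law of total variance, $\Var(X)=\E(\Var(X\mid Y))+\Var(\E(X\mid Y))$, and is thereby forced to carry a \emph{coupled} pair of recursions: one for $V_n=\Var(D^n(i)+\delta)$, whose inhomogeneity contains the second moment, and one for $E_n=\E[(D^n(i)+\delta)^2]$; that second recursion, $E_n=(d_{n-1}-c_{n-1})E_{n-1}+\sqrt{mc_{n-1}}\,\E(D^{n-1}(i)+\delta)$ with initial value $E_i=(m+\1_{\{i=1\}}m+\delta)^2$, is literally your recursion for $s_n=\E(Z_n^2)$. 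The paper then unrolls the coupled system by hand for two steps and infers the general pattern, whereas you observe that the second moment alone closes into a linear recursion, solve it by the product-and-sum formula (verifiable by a one-line induction), and recover the variance only at the end via $\Var(Z_n)=s_n-(\E Z_n)^2$, using $(\E Z_n)^2=A^2\prod_{j=i}^{n-1}d_j$ from Theorem \ref{ewdegstz} together with $a_j^2=d_j$. The two derivations are algebraically equivalent -- since $V_i=0$, the paper's unrolled formula is exactly your $s_n-A^2\prod_{j=i}^{n-1}d_j$ -- but yours eliminates the redundant variance recursion and the pattern-recognition step, and it makes the origin of the factor $d_j-c_j$ (the cancellation you point out between $-c_jZ_j^2$ from the binomial conditional variance and $d_jZ_j^2$ from the squared conditional mean) completely transparent; the only thing the paper's longer route adds is the law-of-total-variance bookkeeping itself, which your argument shows to be unnecessary here.
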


\begin{proof}
Let $n\in \N$ and $i\leq n$ as well as $m\geq 1$ and $\delta>-m$ be fixed. To calculate the variance we use the well known identity
\[\Var(X)=\E(\Var(X\vert Y))+\Var(\E(X\vert Y)),\]
where the conditional variance is given by
\[\Var(X\vert Y)\coloneqq \E(X^2\vert Y)-\E(X\vert Y)^2.\]

It then holds for the variance of the degree of node $i$ at time $n$
\begin{eqnarray*}
\Var(D^n(i))&=&\Var(D^n(i)+\delta)\\
&=&\E(\Var(D^n(i)+\delta\vert D^{n-1}(i)))+\Var(\E(D^n(i)+\delta\vert D^{n-1}(i)))\\
&=&\underbrace{\E(\E((D^n(i)+\delta)^2\vert D^{n-1}(i)))}_{I}-\underbrace{\E((\E(D^n(i)+\delta\vert D^{n-1}(i)))^2)}_{II}\\
&&+\underbrace{\Var(\E(D^n(i)+\delta\vert D^{n-1}(i)))}_{III}.\\
\end{eqnarray*}
The different summands can be determined as follows.

(I):
\begin{eqnarray*}
&&\E[\E((D^n(i)+\delta)^2\vert D^{n-1}(i))]\\
&=&\E[\E((D^n(i)+\delta+D^{n-1}(i)-D^{n-1}(i))^2\vert D^{n-1}(i))]\\
&=&\E[(D^{n-1}(i)+\delta)^2]\\
&&+2\E\left[(D^{n-1}(i)+\delta)m\frac{D^{n-1}(i)+\delta}{(2m+\delta)(n-1)}\right] \\
&&+\E[\E((D^n(i)-D^{n-1}(i))^2\vert D^{n-1}(i))]\\
&=&\underbrace{\E[(D^{n-1}(i)+\delta)^2]}_{=\E[\E((D^{n-1}(i)+\delta)^2\vert D^{n-2}(i))]}\left(1+\frac{2m}{(2m+\delta)(n-1)}\right)\\
&&+\underbrace{\E[\E((D^n(i)-D^{n-1}(i))^2\vert D^{n-1}(i))]}_{IV}.
\end{eqnarray*}

(II): With the calculations in the proof of Theorem \ref{ewdegstz} we get
\begin{eqnarray*}
&&\E[\E(D^n(i)+\delta\vert D^{n-1}(i))^2]\\
&=&\E\left[\left((D^{n-1}(i)+\delta)\left(1+\frac{m}{(2m+\delta)(n-1)}\right)\right)^2\right]\\
&=&\underbrace{\E[(D^{n-1}(i)+\delta)^2]}_{=\E[\E((D^{n-1}(i)+\delta)^2\vert D^{n-2}(i))]}\left(1+\frac{m}{(2m+\delta)(n-1)}\right)^2.
\end{eqnarray*}

(III):
\begin{eqnarray*}
&&\Var[\E(D^n(i)+\delta\vert D^{n-1}(i))]\\
&=&\Var\left[(D^{n-1}(i)+\delta)\left(1+\frac{m}{(2m+\delta)(n-1)}\right)\right]\\
&=&\Var[D^{n-1}(i)+\delta]\left(1+\frac{m}{(2m+\delta)(n-1)}\right)^2.
\end{eqnarray*}

(IV): For the calculation of \[\E[\E((D^n(i)-D^{n-1}(i))^2\vert D^{n-1}(i))]\] we use that, by construction, the number of edges between a new node and a node already in the graph is binomially distributed, hence
\[D^n(i)-D^{n-1}(i)\vert D^{n-1}(i) \sim \mathrm{Bin}(m,p), \ \text{where} \ p\coloneqq \frac{D^{n-1}(i)+\delta}{(2m+\delta)(n-1)}.\]
For $X\sim \mathrm{Bin}(m,p)$ one gets $\E(X^2)=mp(1-p)+(mp)^2$ and it follows
\begin{eqnarray*}
&&\E[\E((D^n(i)-D^{n-1}(i))^2\vert D^{n-1}(i))]\\
&=&\E[D^{n-1}(i)+\delta]\frac{m}{(2m+\delta)(n-1)}\\
&&+\underbrace{\E[(D^{n-1}(i)+\delta)^2]}_{=\E[\E((D^{n-1}(i)+\delta)^2\vert D^{n-2}(i))]}\frac{m(m-1)}{(2m+\delta)^2(n-1)^2}.
\end{eqnarray*}

We now introduce the following abbreviations for simplicity
\begin{eqnarray*}
V_n&\coloneqq& \Var(D^n(i)+\delta),\\
E_n&\coloneqq&\E[\E((D^n(i)+\delta)^2\vert D^{n-1}(i))],\\
D_n&\coloneqq&\E[D^{n}(i)+\delta].
\end{eqnarray*}
By putting the results into the first equation of the proof and using the abbreviations we get
\begin{eqnarray*}
V_n&=&E_{n-1}\left(1+\frac{2m}{(2m+\delta)(n-1)}\right)+E_{n-1}\frac{m(m-1)}{(2m+\delta)^2(n-1)^2}\\
&&+D_{n-1}\frac{m}{(2m+\delta)(n-1)}-E_{n-1}\left(1+\frac{m}{(2m+\delta)(n-1)}\right)^2\\
&&+V_{n-1}\left(1+\frac{m}{(2m+\delta)(n-1)}\right)^2\\
&=&-E_{n-1}\frac{m}{(2m+\delta)^2(n-1)^2}+D_{n-1}\frac{m}{(2m+\delta)(n-1)}\\
&&+V_{n-1}\left(1+\frac{m}{(2m+\delta)(n-1)}\right)^2.\\
\end{eqnarray*}
For $j<n$ let
\begin{eqnarray*}
c_j&\coloneqq& \frac{m}{(2m+\delta)^2j^2},\\
d_j&\coloneqq& \left(1+\frac{m}{(2m+\delta)j}\right)^2.
\end{eqnarray*}
Then we can calculate $\Var(D^n(i))$ with the following recursion formulas and termination conditions
\begin{eqnarray*}
V_n&=&E_{n-1}(-c_{n-1})+D_{n-1}\sqrt{mc_{n-1}}+V_{n-1}d_{n-1},\\
E_n&=&E_{n-1}(d_{n-1}-c_{n-1})+D_{n-1}\sqrt{mc_{n-1}},\\
&&\\
V_i&=&\Var(D^i(i)+\delta)=0,\\
E_i&=&\E((D^i(i)+\delta)^2)=(m+\1_{\{i=1\}}m+\delta)^2.\\
\end{eqnarray*}
Two steps of the recursion yield
\begin{eqnarray*}
V_n&=&E_{n-1}(-c_{n-1})+D_{n-1}\sqrt{mc_{n-1}}+V_{n-1}d_{n-1}\\
&=&E_{n-2}(c_{n-1}c_{n-2}-c_{n-1}d_{n-2}-c_{n-2}d_{n-1})+D_{n-1}\sqrt{mc_{n-1}}\\
&&+D_{n-2}\sqrt{mc_{n-2}}(d_{n-1}-c_{n-1})+V_{n-2}d_{n-1}d_{n-2}\\
&=&E_{n-2}[(d_{n-1}-c_{n-1})(d_{n-2}-c_{n-2})-d_{n-1}d_{n-2}]+D_{n-1}\sqrt{mc_{n-1}}\\
&&+D_{n-2}\sqrt{mc_{n-2}}(d_{n-1}-c_{n-1})+V_{n-2}d_{n-1}d_{n-2}\\
&=&E_{n-3}(d_{n-3}-c_{n-3})[(d_{n-1}-c_{n-1})(d_{n-2}-c_{n-2})-d_{n-1}d_{n-2}]\\
&&+D_{n-3}\sqrt{mc_{n-3}}[(d_{n-1}-c_{n-1})(d_{n-2}-c_{n-2})-d_{n-1}d_{n-2}]\\
&&+D_{n-1}\sqrt{mc_{n-1}}+D_{n-2}\sqrt{mc_{n-2}}(d_{n-1}-c_{n-1})-E_{n-3}c_{n-3}d_{n-1}d_{n-2}\\
&&+D_{n-3}\sqrt{mc_{n-3}}d_{n-1}d_{n-2}+V_{n-3}d_{n-1}d_{n-2}d_{n-3}\\
&=&E_{n-3}[(d_{n-3}-c_{n-3})(d_{n-1}-c_{n-1})(d_{n-2}-c_{n-2})-d_{n-1}d_{n-2}d_{n-3}]\\
&&+D_{n-1}\sqrt{mc_{n-1}}+D_{n-2}\sqrt{mc_{n-2}}(d_{n-1}-c_{n-1})\\
&&+D_{n-3}\sqrt{mc_{n-3}}(d_{n-1}-c_{n-1})(d_{n-2}-c_{n-2})+V_{n-3}d_{n-1}d_{n-2}d_{n-3}.
\end{eqnarray*}
Hence it follows
\begin{eqnarray*}
V_n&=&V_i\prod_{j=i}^{n-1}d_j+E_i\left[\prod_{j=i}^{n-1}(d_j-c_j)-\prod_{j=i}^{n-1}d_j\right]\\
&&+\sum_{j=i}^{n-1}D_j\sqrt{mc_{j}}\prod_{k=j+1}^{n-1}(d_k-c_k)\\
&=&(m+\1_{\{i=1\}}m+\delta)^2\left[\prod_{j=i}^{n-1}(d_j-c_j)-\prod_{j=i}^{n-1}d_j\right]\\
&& +\sum_{j=i}^{n-1}D_j\sqrt{mc_{j}}\prod_{k=j+1}^{n-1}(d_k-c_k).
\end{eqnarray*}
\end{proof}
\begin{figure}[h]
\centering
\includegraphics[width=11cm]{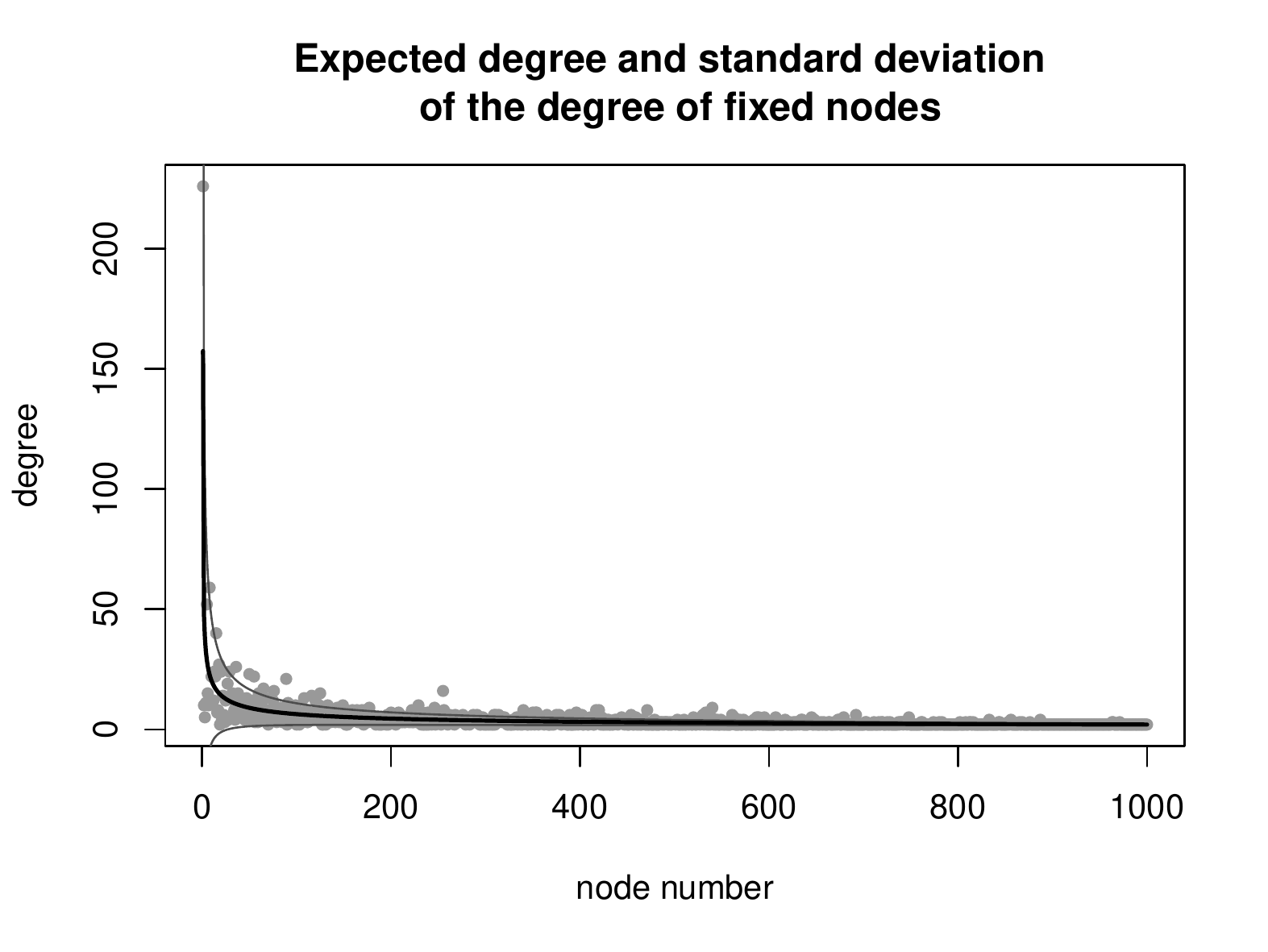}
\caption{The points are the degrees of the nodes in the order in which they were added to the graph. The thick black line is the expected degree of the nodes of a \PA \ graph with $n=1000$ nodes, $m=2$ and $\delta=0$. The thin lines are the standard deviation from the mean of the degrees.}\label{ewdeg}
\end{figure}
In figure \ref{ewdeg} the degrees are plotted with their expectations and variances.

\subsubsection{Subgraphs of \PA\ graphs}
Since we can not determine the degree distribution of the \PA\ graphs, we will analyze them numerically. The constructions of the subgraphs are the same as for the \ER\ graphs.

We first look at the mean degree of subgraphs of \PA \ graphs. For the construction of the subgraph we decide for every edge if it is deleted while keeping all nodes. Let therefore the probability for an edge of the graph to be kept in the subgraph be $q\in (0,1)$. It then holds for fixed $i\in [n]$
\[\PX(D_{Sub}^n(i)=l\vert D^n(i)=k)={k \choose l} q^l(1-q)^{k-l} \ \text{for} \ l\in\{0,\ldots,k\},\ k\leq n.\]
With that we can determine the degree distribution of the subgraph given the degree distribution of the graph by
\begin{eqnarray*}
\PX(D^{n}_{Sub}(i)=l)&=&\sum_{k=l}^{n-1}\PX(D^n_{Sub}(i)=l\vert D^n(i)=k)\PX(D^n(i)=k)\\
&=&\sum_{k=l}^{n-1}{k \choose l}q^l(1-q)^{k-l}\frac{1}{n}\sum_{i=1}^n\1_{\{D^n(i)=k\}}\\
&=&\frac{1}{n}\sum_{i=1}^n{D^n(i) \choose l}q^l(1-q)^{D^n(i)-l},
\end{eqnarray*}
where $D^n(i)\sim P_k(n)$. Hence for the mean degree it holds
\begin{eqnarray*}
\E(D^n_{Sub}(i))&=& \sum_{l=0}^{n-1} l \frac{1}{n}\sum_{i=1}^n{D^n(i) \choose l}q^l(1-q)^{D^n(i)-l}\\
&=& \frac{1}{n}\sum_{i=1}^n\sum_{l=0}^{D^n(i)} l {D^n(i) \choose l}q^l(1-q)^{D^n(i)-l}\\
&=& \frac{1}{n}\sum_{i=1}^nD^n(i)q= \frac{q}{n}2\# E_n=2mq.
\end{eqnarray*}
Since we are not able to determine the exact degree distributions we calculated the empirical degree distributions. These are depicted in figure \ref{PAdeg} for a \PA \ graph and its subgraph by selection of edges together with the limit distribution from formula (\ref{Grenz}). By comparing the two degree distributions one sees that they are both scale free with approximately the same exponent. But the subgraph is not necessarily connected, whereas the \PA \ graph itself is by construction. Hence it is not possible to construct the subgraph with our \PA \ model.

We now want to look at subgraphs of our \PA \ graph which are constructed by deleting nodes. We again select the nodes in the two different ways we used for the \ER\ graphs. Hence we fix a number of nodes and choose a subgraph of that size uniformly at random out of the set of all subgraphs with that size. For the other mechanism we decide for every node independently of the others if we delete it. The empirical degree distributions are depicted in figure \ref{degPA} together with the limit distribution from formula (\ref{Grenz}). One can see that the exponent $\tau$ of the power law stays approximately the same as in the \PA \ graph but the subgraphs are again not necessarily connected which is a complex problem related to percolation theory. Additionally the mean degree does not have to be a multiplicity of 2 which is the case in our model. Hence its not possible to construct these subgraphs with our initial \PA \ model used to construct the entire graph.
\begin{figure}[p]
\centering
\includegraphics[width=6cm]{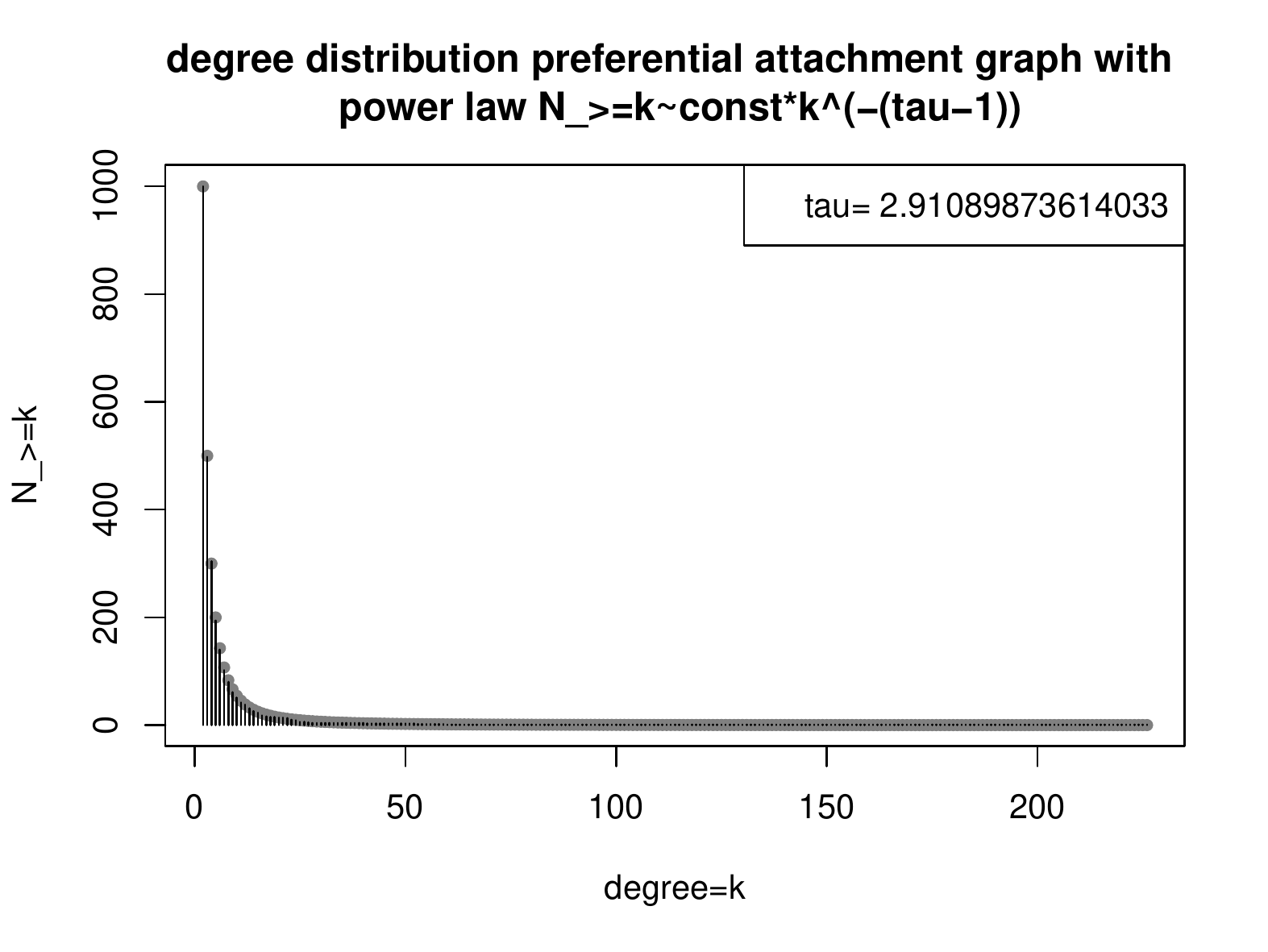}
\includegraphics[width=6cm]{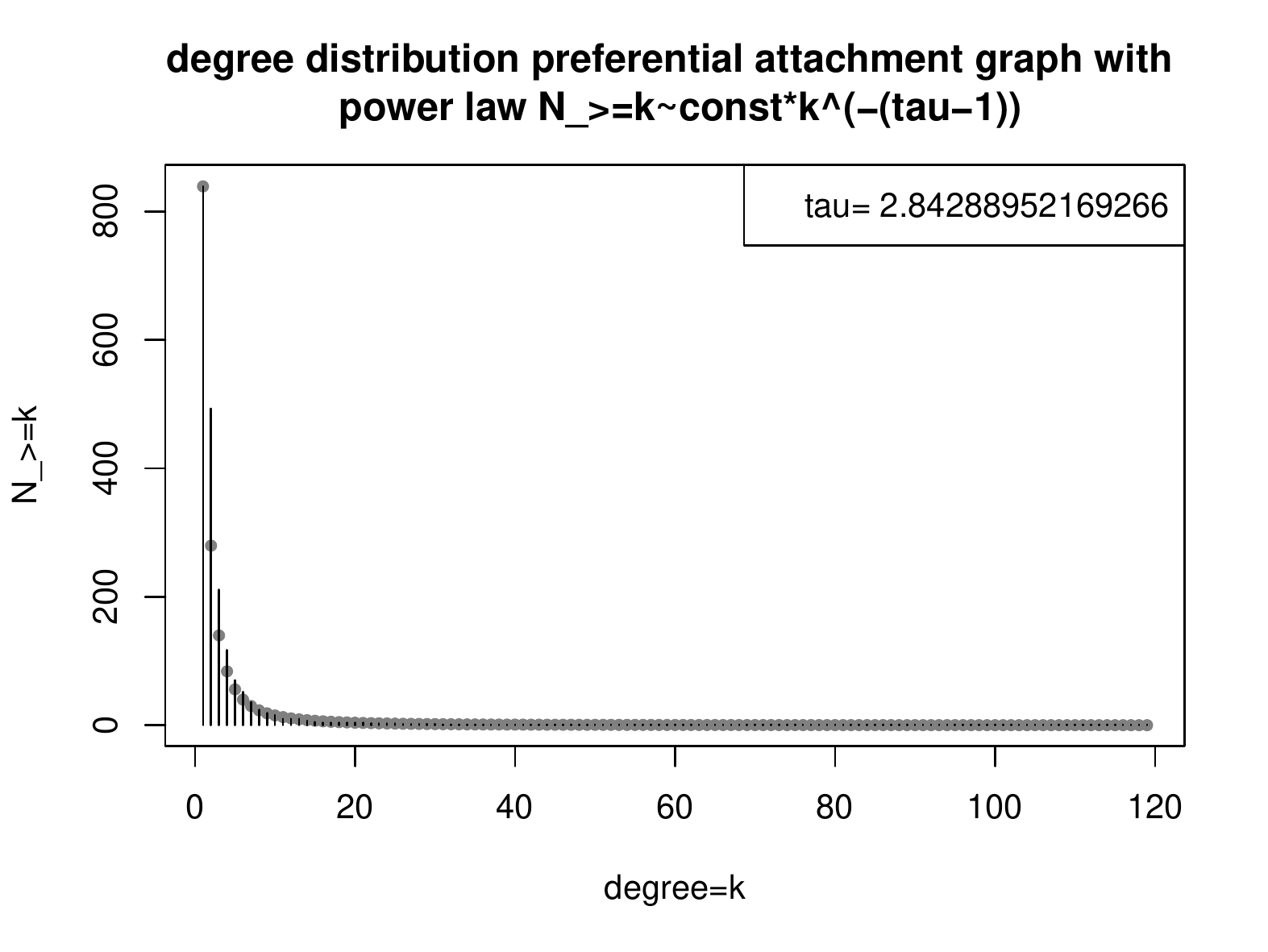}
\includegraphics[width=6cm]{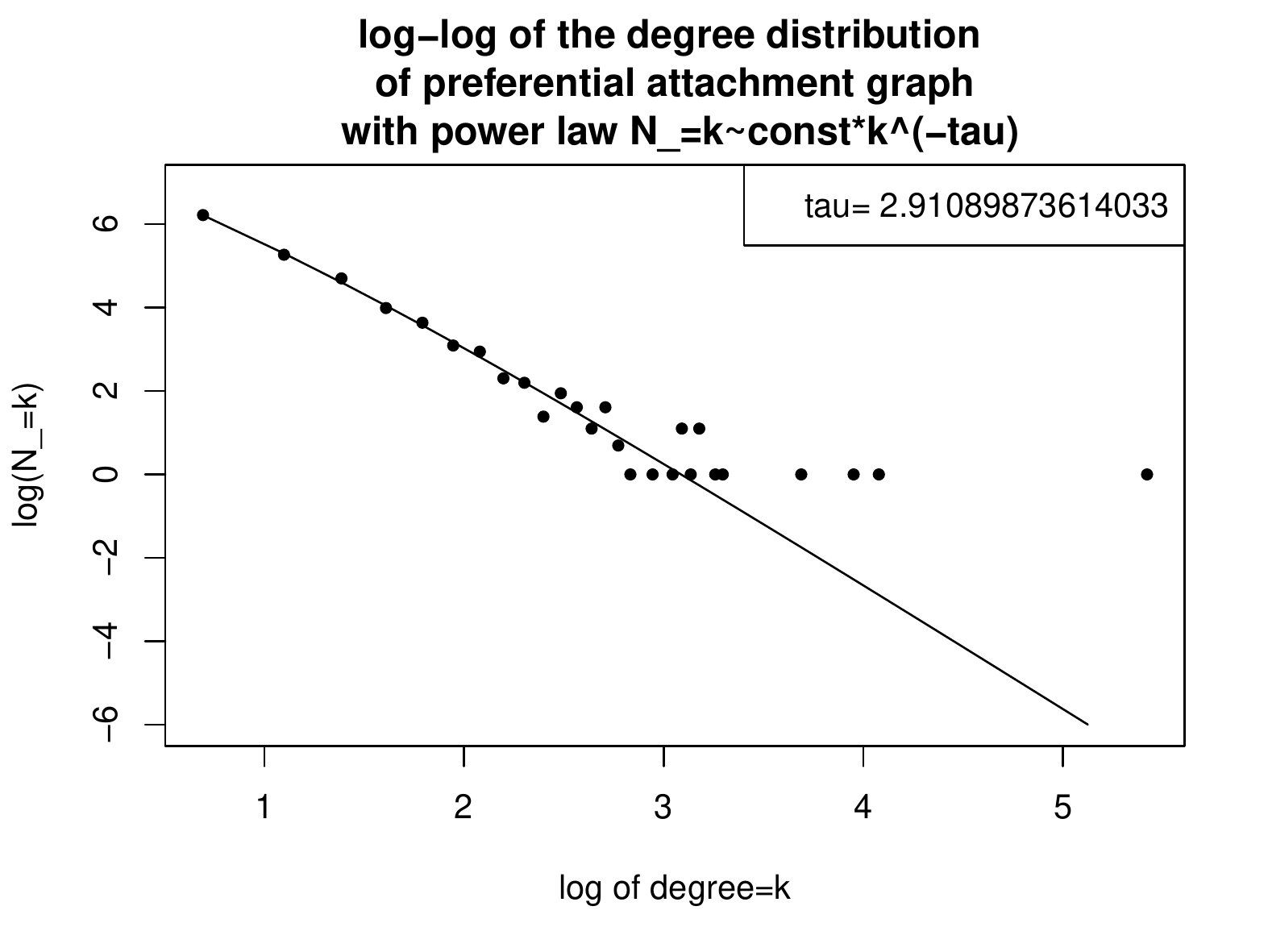}
\includegraphics[width=6cm]{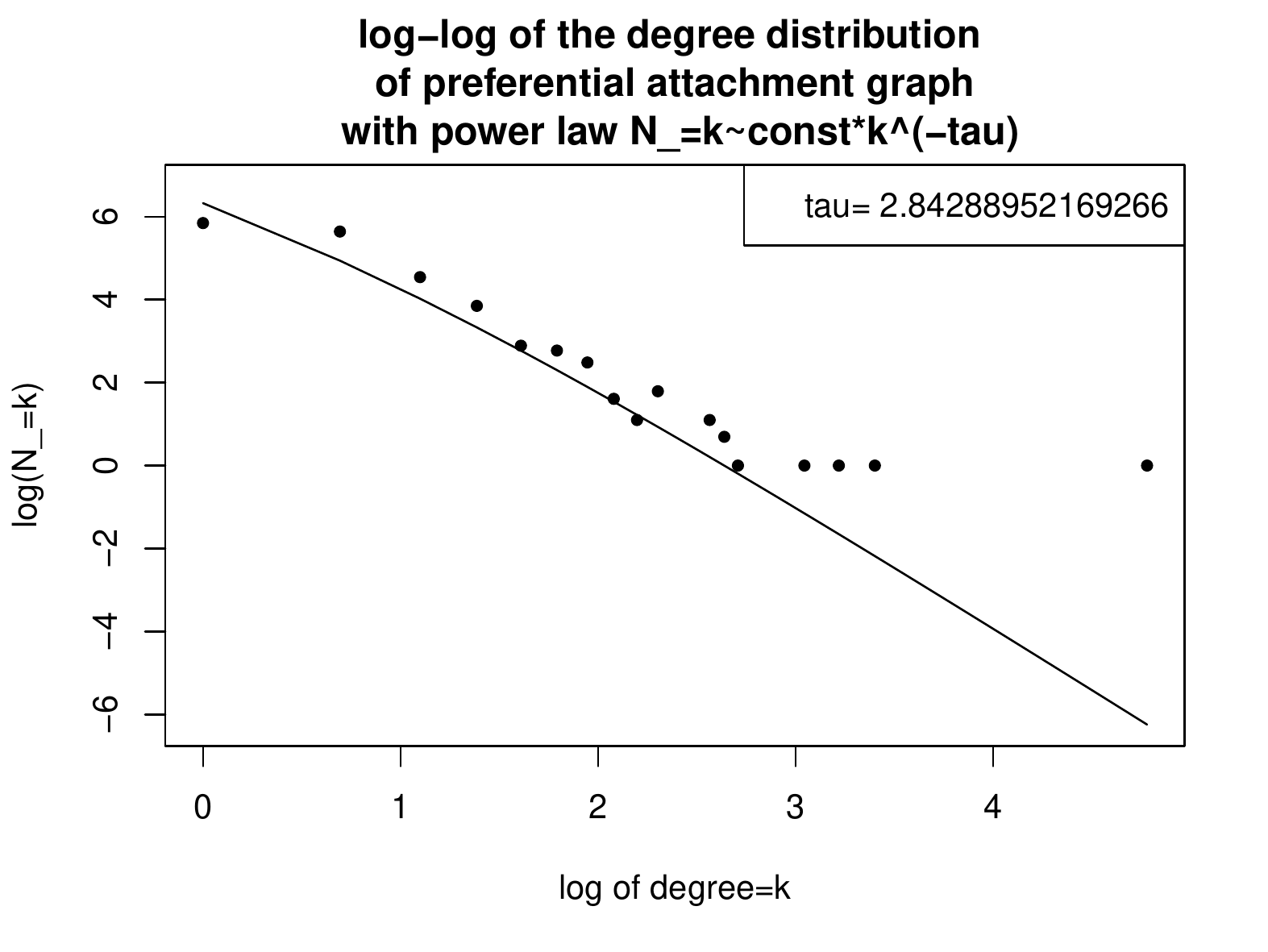}
\includegraphics[width=6cm]{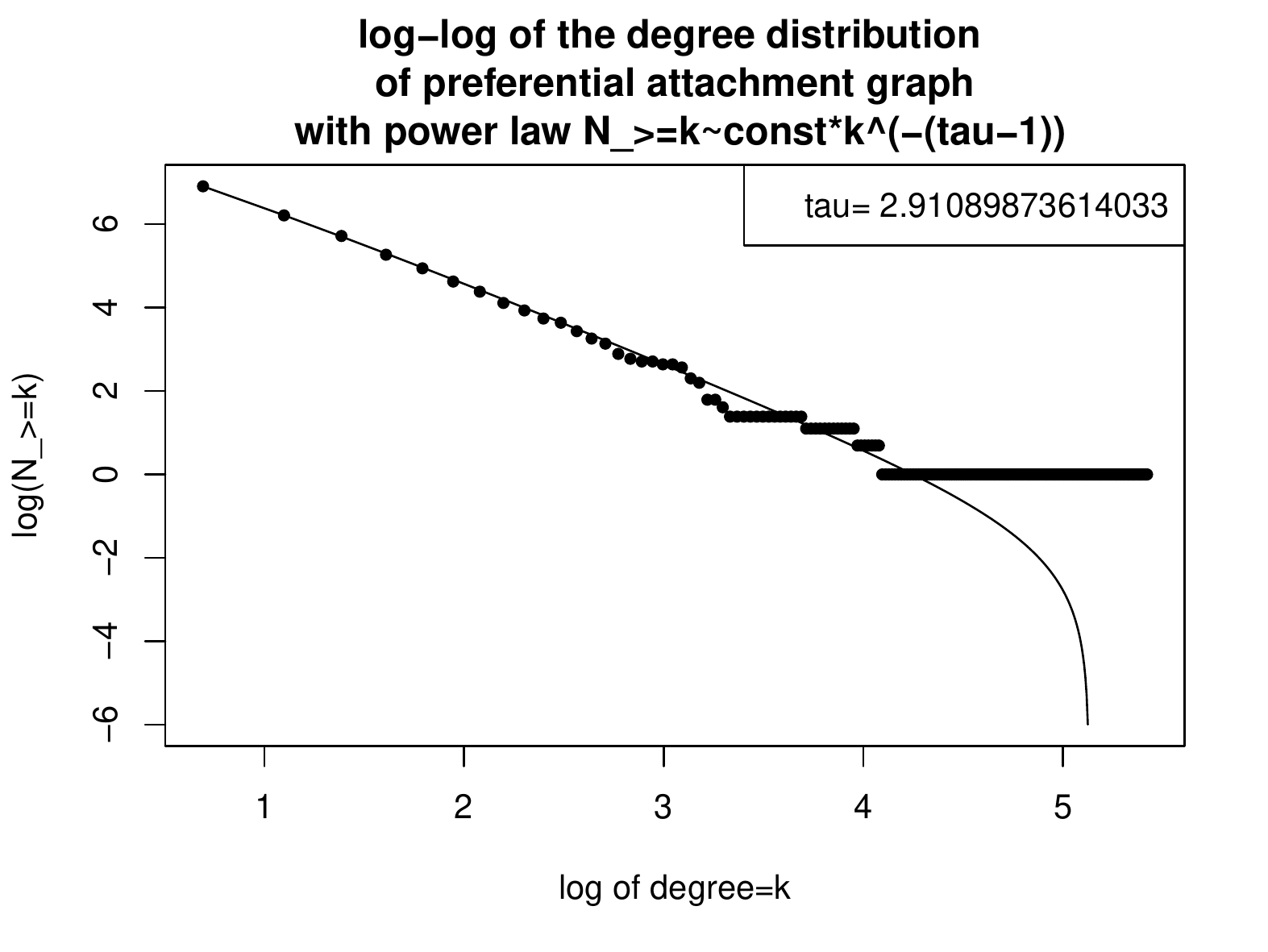}
\includegraphics[width=6cm]{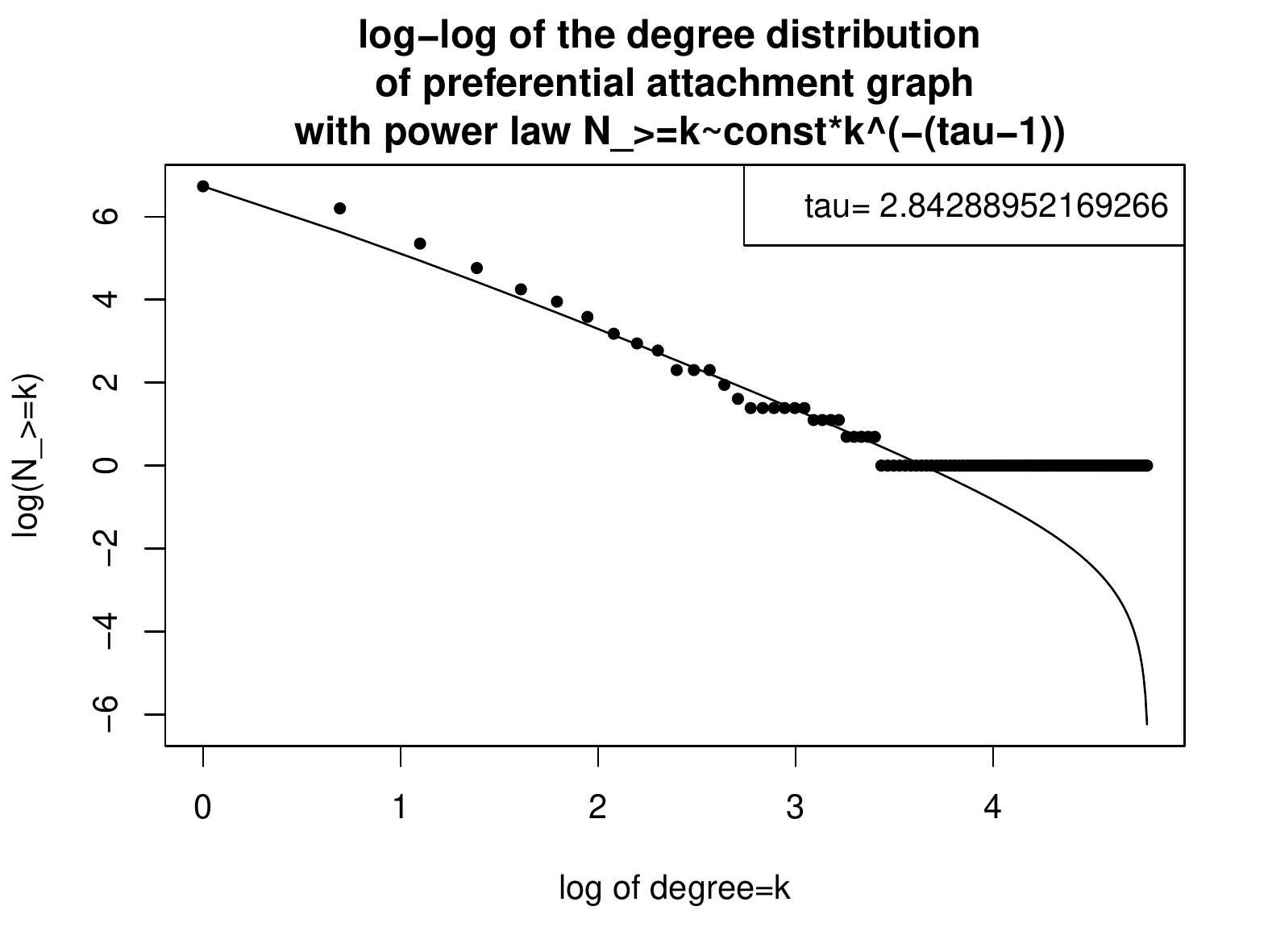}
\caption{On the left is the degree distribution of a PA graph with $n=1000$, $m=2$ and $\delta=0$. This PA graph is used for the construction of all PA subgraphs and the analysis of the expected degrees.\newline
On the right is the degree distribution of a subgraph constructed by selection of edges with probability $p=0.5$ and limit distribution $(p_k)_{k\geq m}$ with $n=500$, $m=1$ and $\delta=0$.\newline
The dots in the top pictures represent the limit distribution $(p_k)_{k\geq m}$. In the lower four pictures $(p_k)_{k\geq m}$ is represented by the line.
The \PA \ graph is scale free with $\tau=2.911$, the subgraph has power law exponent $\tau=2.843$.}\label{PAdeg}
\end{figure}
\begin{figure}[p]
\centering
\includegraphics[width=6cm]{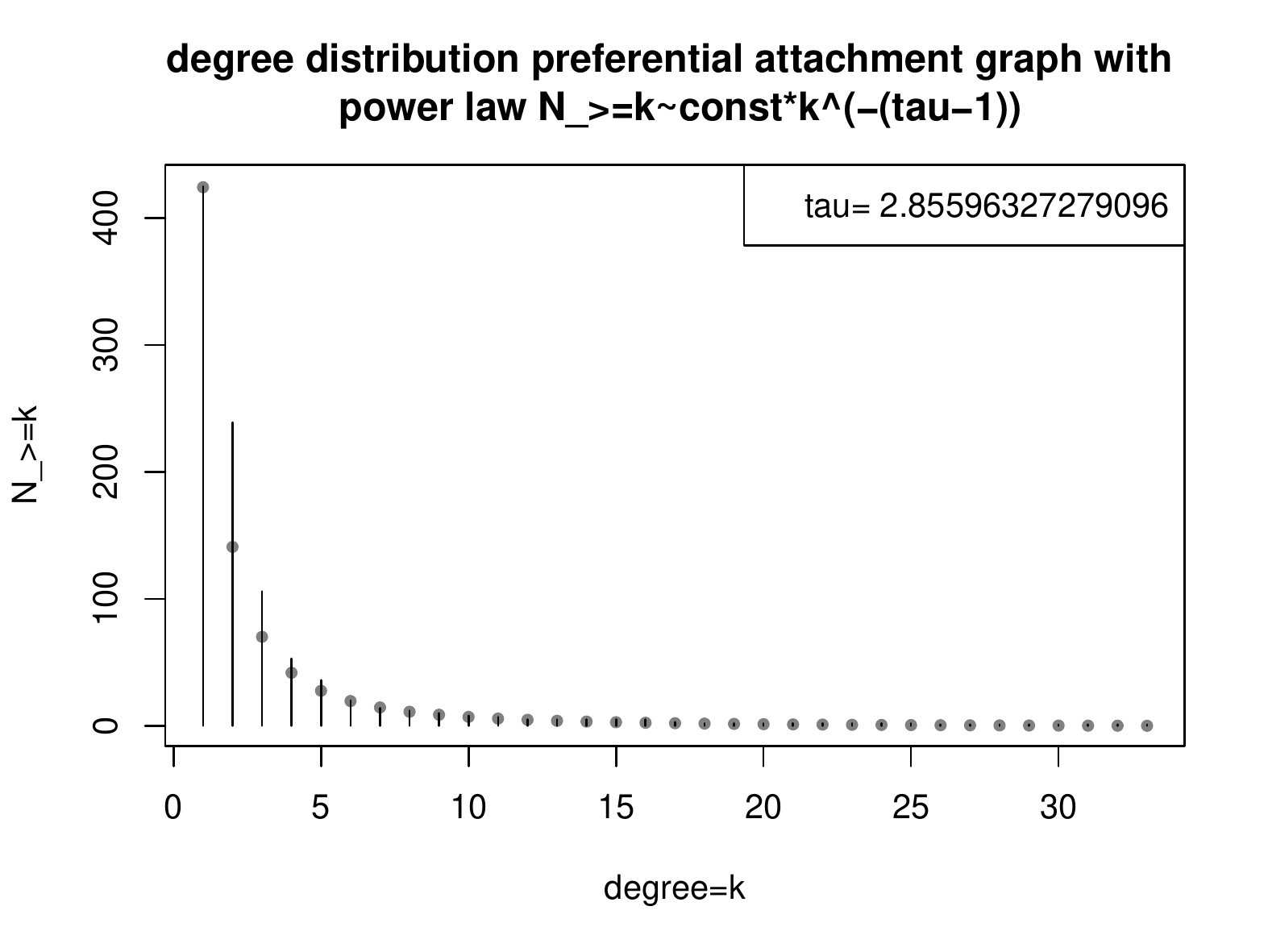}
\includegraphics[width=6cm]{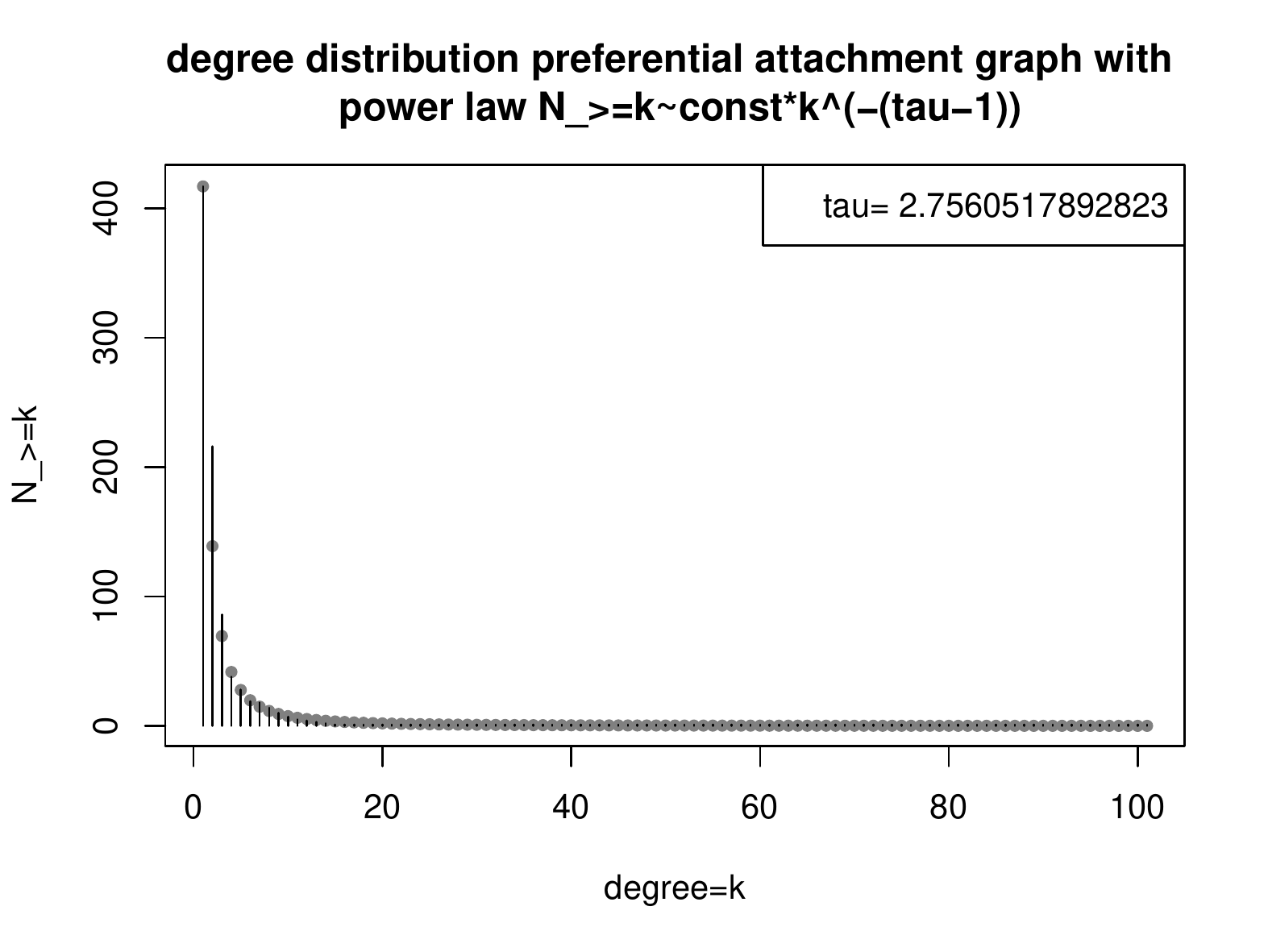}
\includegraphics[width=6cm]{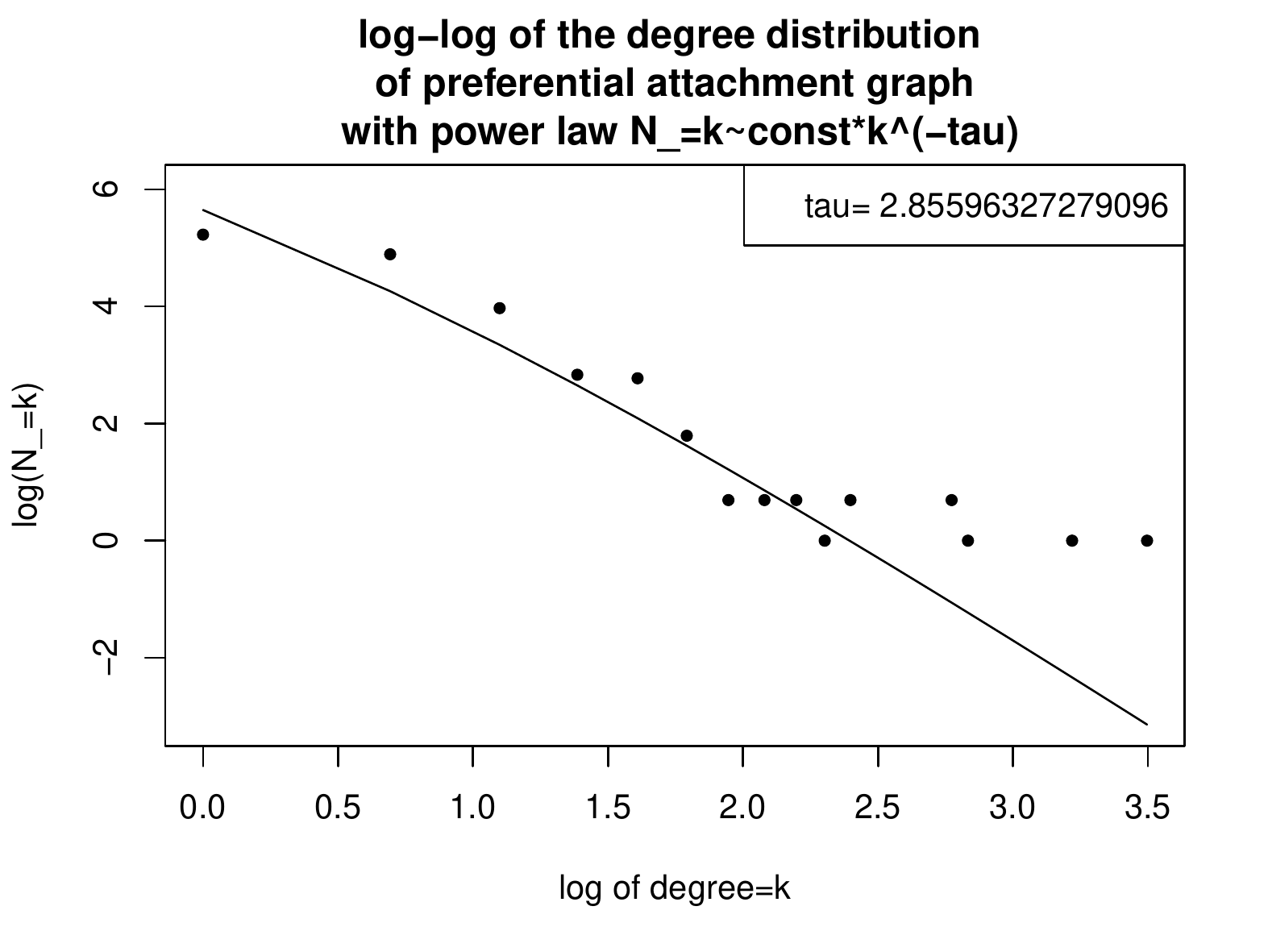}
\includegraphics[width=6cm]{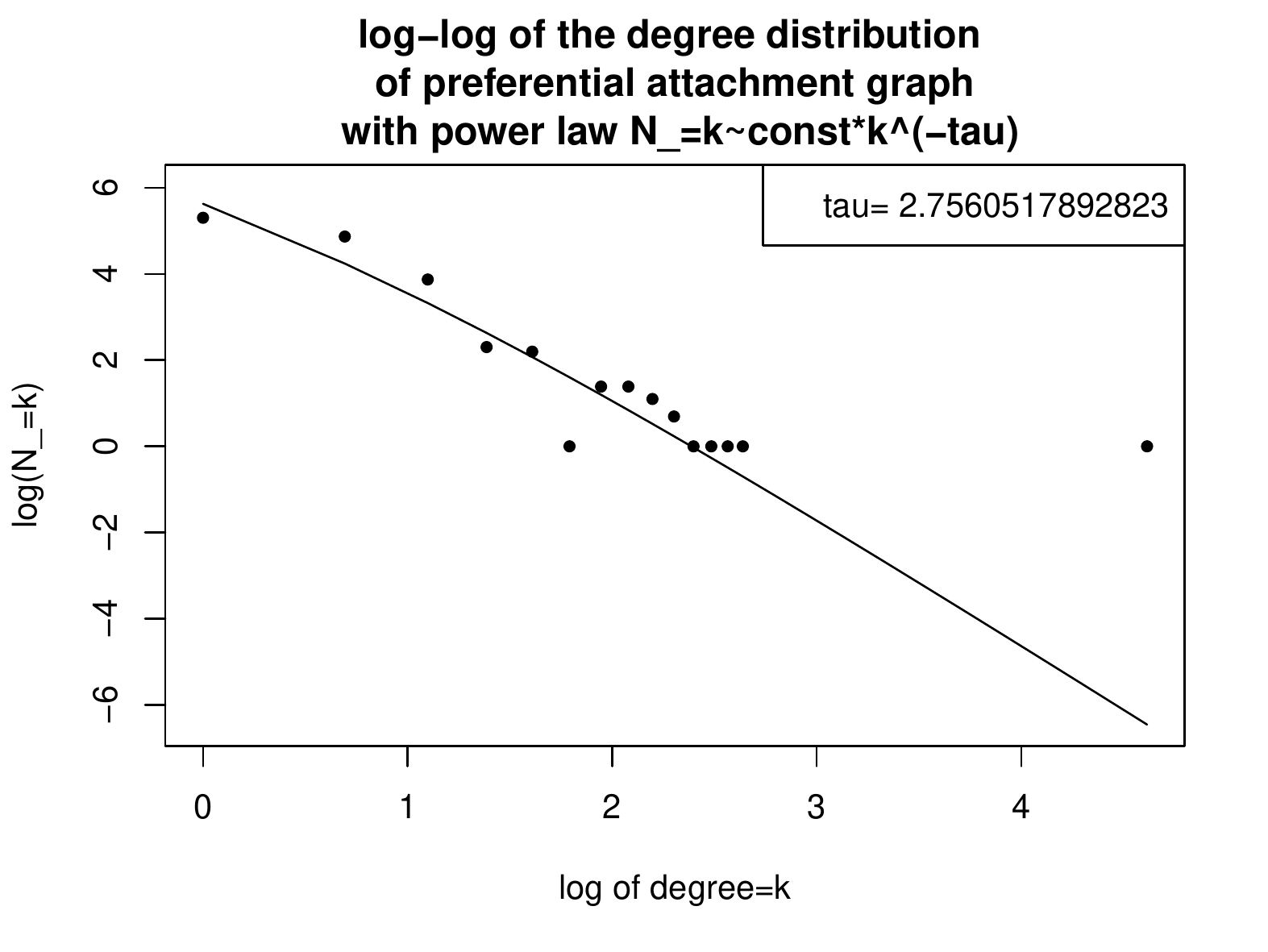}
\includegraphics[width=6cm]{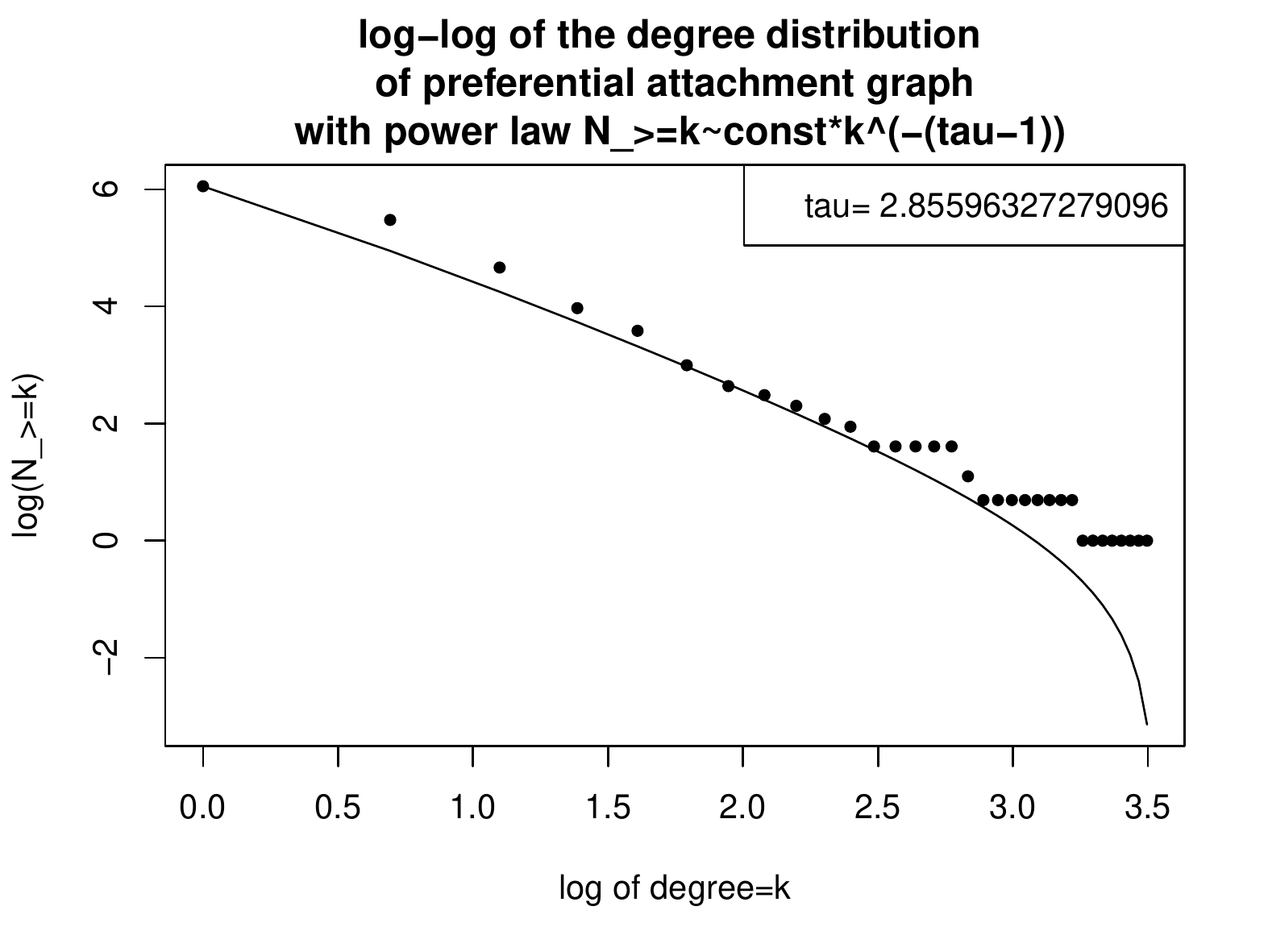}
\includegraphics[width=6cm]{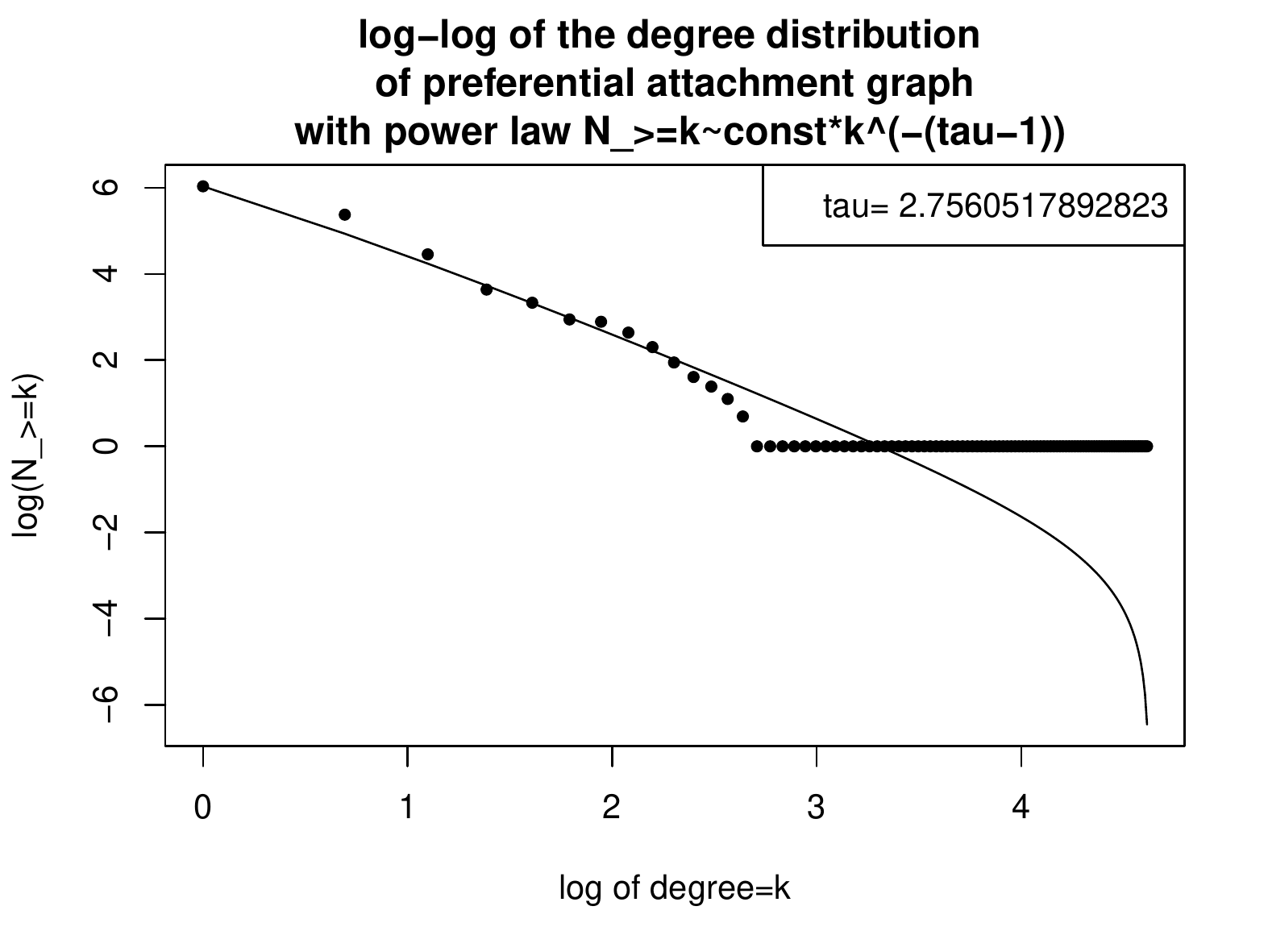}
\caption{Here we see the degree distributions of two subgraphs constructed by selection of nodes together with the corresponding limit distributions. On the left we have uniform selection of nodes with $500$ nodes and on the right binomial selection of nodes with probability $q=0.5$. The parameters of the limit distributions are in both cases given by $n=500$, $m=1$ and $\delta=0$. Uniform selection of nodes yields a power law exponent $\tau=2.847$, whereas the binomial selected subgraph is scale free with $\tau=2.756$.\newline
We calculated $m$ for the subgraphs by the identity $\#E=mn$.\newline
One can see that the limit distribution is a good approximation for the degree distribution in the beginning. The large deviation at the end is due to the small number of nodes and single nodes having a degree which occurs with a very small probability.}\label{degPA}
\end{figure}


\section{Protein networks}\label{proteins}
\begin{figure}[h]
\centering
\includegraphics[width=12cm]{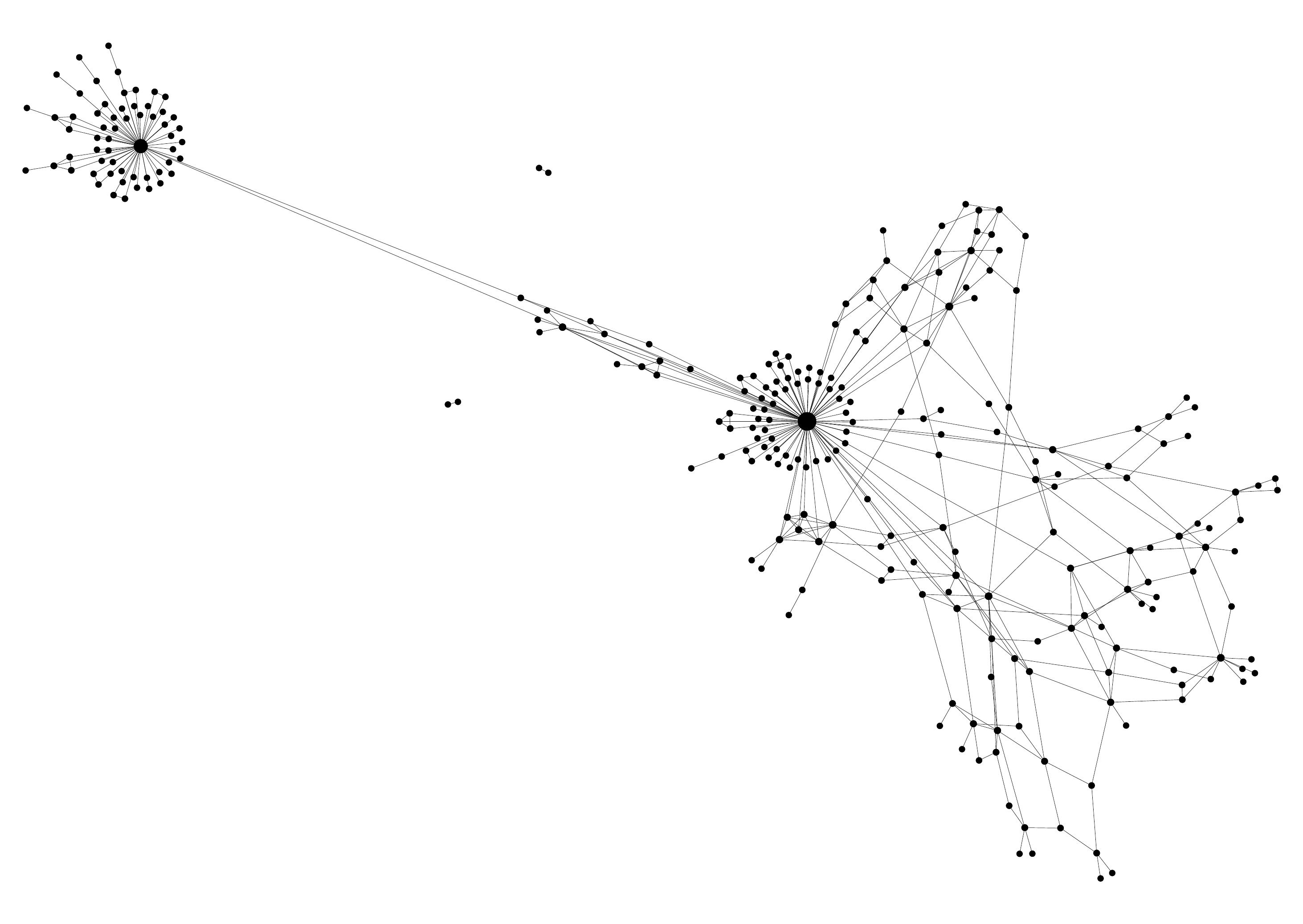}
\caption{Graphical illustration of a protein network of 263 class A $\beta$-lactamases of the TEM family where adjacent proteins differ by one mutation.}\label{ProtNW}
\end{figure}

We now want to analyze a protein network in order to decide if it can be constructed with our \PA\ model. Proteins are sequences of amino acids. The length of most of them is between 250 and 420 amino acids. But there are some with lengths up to 27000 amino acids.\\
Proteins consist of up to 20 different amino acids. The order of the amino acids determines the structure and function of the protein. It is obvious that not every constellation of amino acids yields  a biological sufficient protein. Yet proteins do not necessarily loose their functionality if only few of the amino acids are replaced.\\
It is possible to compare proteins regarding the order of their amino acids and assign scores according to their similarity. One could also look at the number of mutations between two proteins as a score.\\
Proteins of the same family can then be connected to form a network by connecting two proteins that differ by one mutation. The structure of such a network is depicted in figure \ref{ProtNW}. We therefore took class A $\beta$-lactamases of the TEM family which have a length between $200$ and $300$ amino acids. Figure \ref{deg_Prot} shows the degree distribution of the protein network which is scale free with $\tau=2.641 $.
\begin{figure}[ht]
\centering
\includegraphics[width=6cm]{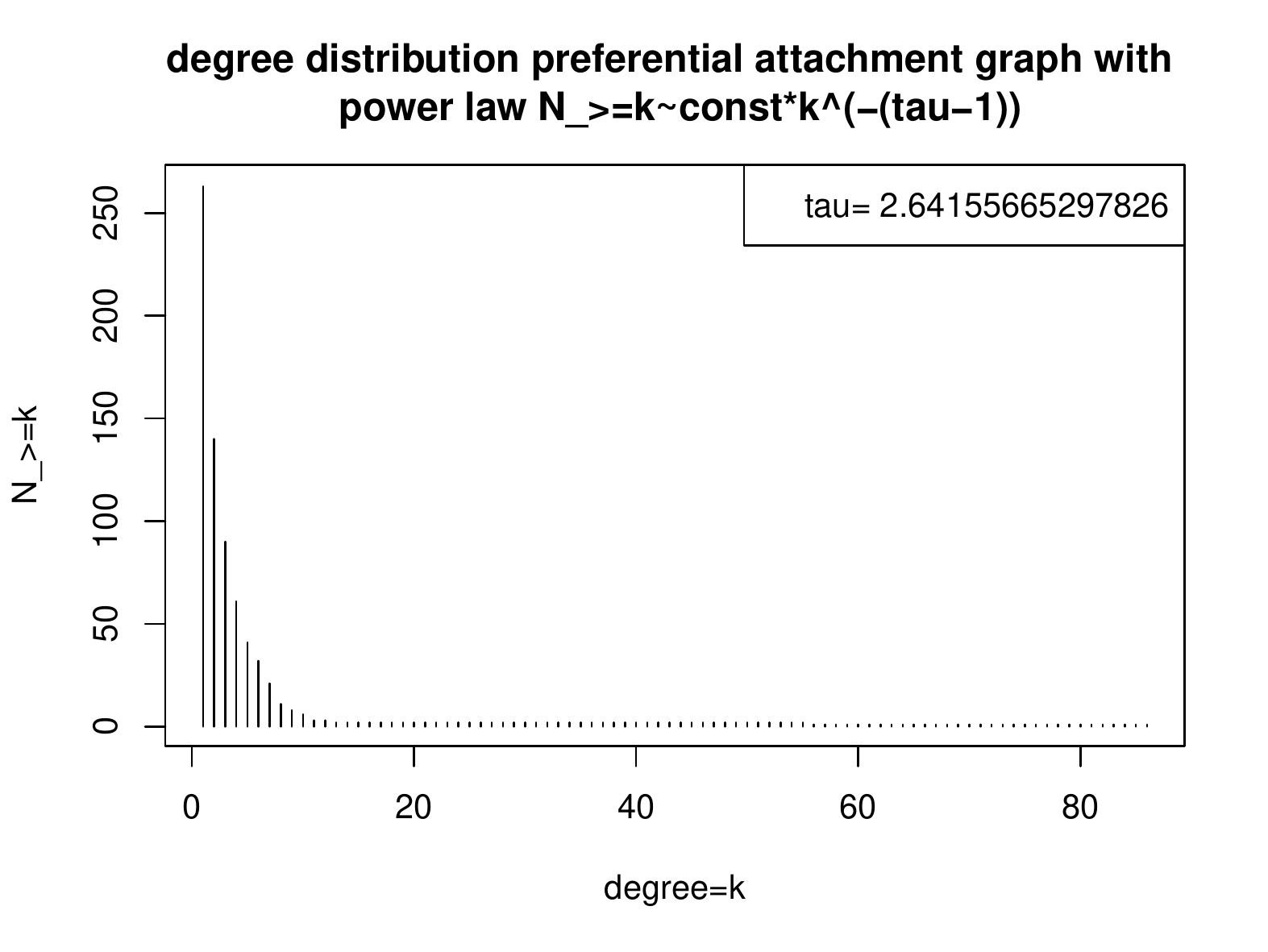}
\includegraphics[width=6cm]{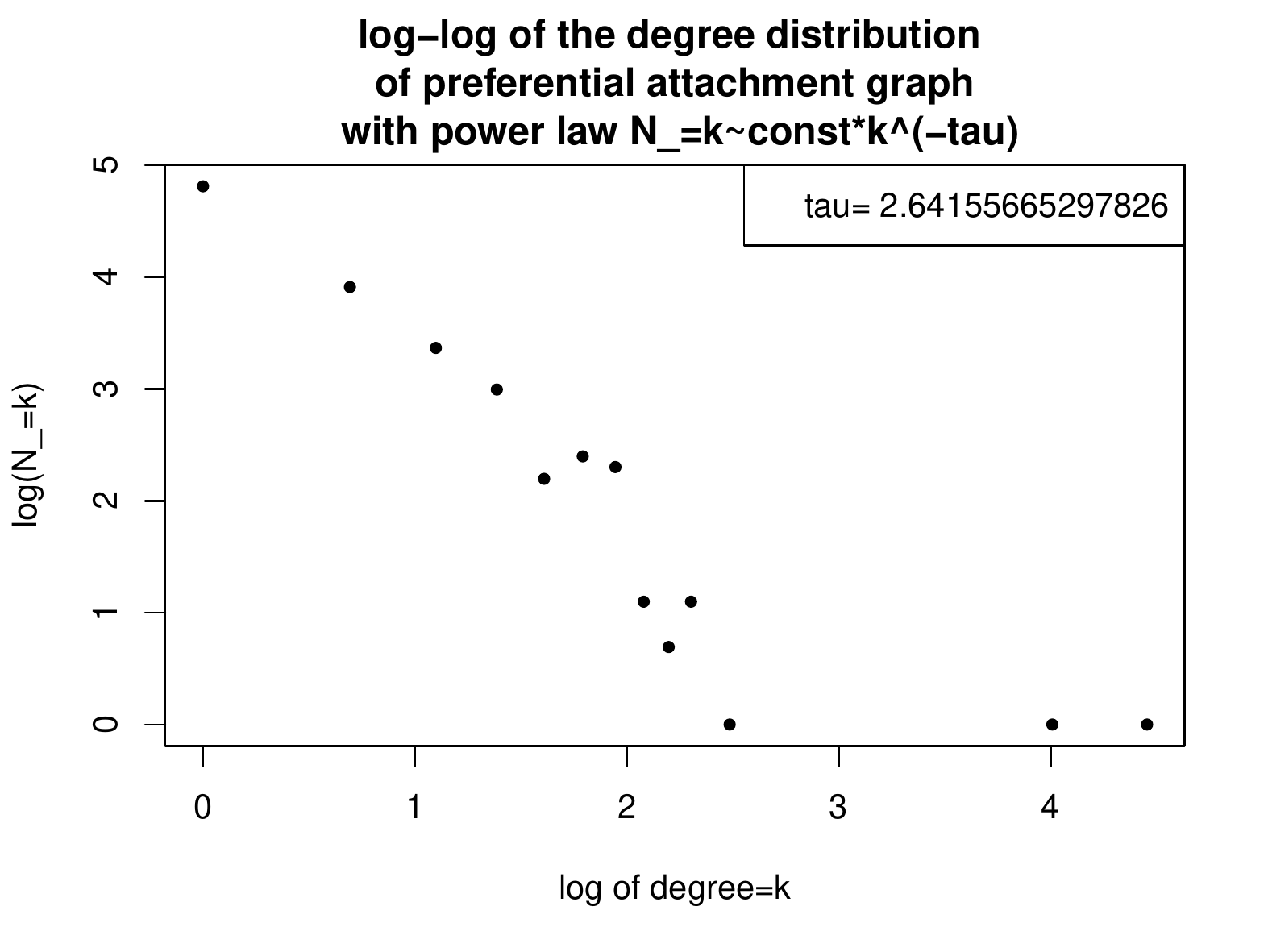}
\caption{Degree distribution of the protein network with $n=263$ nodes and $\tau=2.642$.}\label{deg_Prot}
\end{figure}
The question that arises is if we can model the protein network with our \PA \ model. As we can see in figure \ref{ProtNW} the graph is not connected and this actually is the case for the majority of discovered protein families. Yet by construction every graph coming from our \PA \ model is connected. But since our network only contains discovered proteins it still could be possible to model it with our \PA \ model if we had the full network with all connections. Lets therefore look at the average degree in our network. It is given by
\[\frac{1}{n}\sum_{i=1}^n D(i)\approx 3.027,\]
which is not possible with our model since by equation (\ref{mean}) we always get an even degree. As stated before we are only looking at a subgraph of the network of all existing proteins of this family. Therefore it could still be possible to get the protein network as a subgraph of a \PA \ graph if the parameters are chosen carefully. Also one could think of a \PA \ model which does not yield a connected graph.


\section{Outlook}\label{outlook}
The discussion in section \ref{proteins} did not confirm that it is possible to model protein networks with our \PA \ model. But like stated before there might be a solution to this by using a different \PA \ approach which yields an unconnected graph with a random number of edges added every time a new node enters the graph.\\
Most protein networks are scale free but there are actually many possibilities to construct scale free graphs with different models such as the SN-Model introduced by Frisco in 2011 \cite{Frisco} where the nodes itself have a structure and are connected according to the differences in these structures. It is also possible to imitate the construction of proteins by using a Hidden Markov Model where the overlaying graph corresponds to the protein network.

\newpage
\bibliographystyle{abbrv}
\bibliography{literature}

\emph{Institute of Stochastics and Applications, University of Stuttgart, Pfaffenwaldring 57, 70569 Stuttgart, Germany}\\
\emph{E-mail adress:} \texttt{klemens.taglieber@mathematik.uni-stuttgart.de}\\
\\
\emph{Institute of Stochastics and Applications, University of Stuttgart, Pfaffenwaldring 57, 70569 Stuttgart, Germany}\\
\emph{E-mail adress:} \texttt{uta.freiberg@mathematik.uni-stuttgart.de}
\end{document}